\newtheorem{theorem}{Theorem}[section]
\newtheorem{lemma}[theorem]{Lemma}
\newtheorem{proposition}[theorem]{Proposition}
\newtheorem{corollary}[theorem]{Corollary}
\theoremstyle{definition}
\newtheorem{definition}[theorem]{Definition}
\theoremstyle{remark}
\newtheorem{remark}[theorem]{Remark}
\newcommand{\B}{\mathcal B}
\renewcommand{\H}{\mathcal H}
\newcommand{\K}{\mathcal K}
\newcommand{\RR}{\mathbb R}
\newcommand{\CC}{\mathbb C}
\newcommand{\<}{\langle}
\renewcommand{\>}{\rangle}
\renewcommand{\:}{\colon}
\newcommand{\suchthat}{\,|\,}
\newcommand{\tensor}{\mathbin{\otimes}}
\newcommand{\iso}{\cong}
\newcommand{\IM}{\quad\Longrightarrow\quad}
\newcommand{\tr}{\mathrm{tr}}
\newcommand{\trr}{\mathrm{t}\tilde{\mathrm r}}
\newcommand{\id}{\mathrm{id}}
\begin{document}

\title{Characterizations of homomorphisms \\ among unital completely positive maps}
\author{Andre Kornell}
\address{Department of Mathematics and Statistics, Dalhousie University, Halifax, Nova Scotia}
\email{akornell@dal.ca}
\thanks{This work was
supported by the Air Force Office of Scientific Research under Award No. FA9550-21-1-0041.}
\subjclass[2020]{46L07 (Primary) 15A30, 46L30, 94A17 (Secondary)}
\keywords{Von Neumann entropy, Choi matrix, finite-dimensional $C^*$-algebra, completely positive map, $*$-homomorphism}

\begin{abstract}
We prove that a unital completely positive map between finite-dimensional $C^*$-algebras is a homomorphism if and only if it is completely entropy-nonincreasing, where the relevant notion of entropy is a variant of von Neumann entropy. This adjusted von Neumann entropy is the negative of the relative entropy with respect to the uniform state on the $C^*$-algebra, up to an additive constant. As an intermediate step, we prove that a unital completely positive map between finite-dimensional $C^*$-algebras is a homomorphism if and only if its adjusted Choi operator is a projection. Both equivalences generalize familiar facts about stochastic maps between finite sets.
\end{abstract}

\maketitle

\section{Introduction}

Let $A$ be a finite-dimensional $C^*$-algebra. Up to isomorphism, $A = M_{n_1}(\CC) \oplus \cdots \oplus M_{n_\ell}(\CC)$ for some positive integers $n_1, \ldots, n_\ell$. We define $\tr\: A \to \CC$ by
$$
\tr(a_1 \oplus \ldots \oplus a_\ell) = \tr(a_1) + \cdots + \tr(a_\ell).
$$
We define the \emph{entropy} of a state $\mu\: A \to \CC$ by $S(\mu) = - \mu(\log d)$, where $d \in A$ is the unique operator such that $\mu(a) = \tr(da)$. This definition generalizes the von Neumann entropy of a mixed state in the obvious way \cite{VonNeumann}*{section~V.2}.
We work primarily with a different but closely related notion of entropy. We define the \emph{adjusted entropy} of $\mu$ by $\tilde S(\mu) = -\mu(\log d) + \mu(\log \zeta_A)$, where $\zeta_A \in A$ is defined by
$
\zeta_A = n_1 1_{n_1} \oplus \cdots \oplus n_\ell 1_{n_\ell}.
$

The main result of the present paper is the following theorem.

\begin{theorem}[also Corollary~\ref{Q}]\label{A}
Let $A$ and $B$ be finite-dimensional $C^*$-algebras, and let $\varphi\: A \to B$ be a unital completely positive map. The following are equivalent:
\begin{enumerate}
\item $\varphi$ is a homomorphism,
\item for all positive integers $k$ and all states $\mu$ on $M_k(B) = B \tensor M_k(\CC)$,
$$
\tilde S(\mu \circ (\varphi \tensor \id)) \leq \tilde S(\mu).
$$
\end{enumerate}
\end{theorem}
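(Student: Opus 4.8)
The plan is to prove the two implications separately, using as the pivot the paper's intermediate characterization that a unital completely positive map is a homomorphism if and only if its adjusted Choi operator is a projection. The first thing to record is that the adjusted entropy is, up to sign, a relative entropy against the central weight $\zeta$: if $\mu(a) = \tr(da)$, then, since $\zeta_A$ is central and hence commutes with $d$, one has $\tilde S(\mu) = -\tr(d\log d - d\log\zeta_A)$. Rewriting the target inequality as a comparison of two such relative entropies lets me bring the standard apparatus (concavity of entropy, the pinching inequality) to bear while the $\zeta$-terms track block dimensions.

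For the implication $(1)\Rightarrow(2)$ I would first reduce to the case $k=1$: if $\varphi$ is a homomorphism then so is $\varphi \tensor \id$, so it suffices to show $\tilde S(\nu \circ \theta) \leq \tilde S(\nu)$ for an arbitrary unital $*$-homomorphism $\theta\: C \to D$ of finite-dimensional $C^*$-algebras and an arbitrary state $\nu$ on $D$. Using that $*$-homomorphisms commute with the continuous functional calculus and that $\theta$ is unital (so $\theta(\zeta_C)$ is invertible), I would expand the difference as
$$
\tilde S(\nu) - \tilde S(\nu \circ \theta) = \bigl(S(\nu) - S(\nu\circ\theta)\bigr) + \nu\bigl(\log \zeta_D - \log \theta(\zeta_C)\bigr).
$$
The structure theory of homomorphisms between finite-dimensional $C^*$-algebras presents $\theta$ through a multiplicity matrix, exhibiting $D$ block by block as system-tensor-multiplicity spaces on which $\theta(C)$ acts on the first factor; under this description the trace-adjoint of $\theta$ traces out the multiplicity spaces and then aggregates blocks. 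The right-hand side is then nonnegative: concavity of the entropy (equivalently subadditivity of $-\tr(e\log e)$ together with the fact that a pinching does not decrease entropy) controls the first summand, while the term $\nu(\log \zeta_D - \log\theta(\zeta_C))$ supplies exactly the $\log(\text{dimension})$ corrections needed. The small example $C = \CC^2 \hookrightarrow M_2 = D$ already shows the $\zeta$-correction is indispensable, since there the bare inequality $S(\nu \circ \theta) \leq S(\nu)$ is false and is rescued only by an additive $\log 2$.

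For the converse $(2)\Rightarrow(1)$ I would argue by contraposition, and here the amplification by $M_k(\CC)$ is essential. Suppose $\varphi$ is not a homomorphism. By the Choi characterization its adjusted Choi operator $\hat\varphi$ is positive, since $\varphi$ is completely positive, but is not a projection, so its spectrum meets the complement of $\{0,1\}$. By construction $\hat\varphi$ is the density of the state $\mu_0 \circ (\varphi \tensor \id)$ obtained by feeding the canonical $\zeta$-weighted maximally entangled state $\mu_0$ on $M_k(B)$ through $\varphi \tensor \id$, for $k$ determined by the representation dimension on which the Choi construction is based; the adjusted entropy of the input $\mu_0$ is a fixed constant independent of $\varphi$. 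Taking $\mu = \mu_0$, I would compute that $\tilde S(\mu \circ (\varphi \tensor \id)) - \tilde S(\mu)$ equals a spectral functional of $\hat\varphi$ built from $t \mapsto -t\log t$; because the marginal constraints coming from unitality of $\varphi$ fix the affine part, this functional attains the value making the difference vanish exactly when every eigenvalue of $\hat\varphi$ lies in $\{0,1\}$, i.e. exactly when $\hat\varphi$ is a projection, and is strictly positive otherwise. Since $\hat\varphi$ is not a projection, $\mu$ violates $(2)$.

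The main obstacle is this converse, and within it the bookkeeping around the maximally entangled state: one must choose $k$ and the reference vector so that $\mu$ is a genuine state on $M_k(B)$, verify via unitality of $\varphi$ that $\mu \circ (\varphi \tensor \id)$ has density $\hat\varphi$ with the claimed marginals, and confirm that after the $\zeta$-adjustments the entropy difference collapses to precisely the spectral functional detecting projections, with strictness furnished by the strict convexity of $t \mapsto -t\log t$. The forward direction is comparatively routine once the multiplicity decomposition is in place; it is the identification of the correct test state and the exact evaluation of its adjusted-entropy change in terms of the spectrum of $\hat\varphi$ that carries the weight of the proof.
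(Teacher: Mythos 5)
Your proposal is correct in outline, and its two halves relate to the paper quite differently. The converse direction is essentially the paper's own argument: the paper (Theorem~\ref{O}, implications $(2)\Rightarrow(3)\Rightarrow(1)$, fed by Theorem~\ref{L}) likewise tests the inequality on the $\zeta$-weighted maximally entangled state---there called the uniform state on the diagonal projection $\delta_B$---and likewise identifies the entropy defect as the spectral functional $-(\dim B)^{-1}\trr(x\log x)$ of the pushforward density $x=(\varphi^\ddagger\tensor\id)(\delta_B)$, which vanishes exactly when $x$ is a projection, whence $\varphi$ is a homomorphism by the Choi-type characterization you take as pivot. Two points you leave implicit are genuinely needed there: (i) the sign argument requires the spectrum of $x$ to lie in $[0,1]$, i.e.\ $\|x\|\leq 1$, which the paper extracts from Lemma~\ref{J} plus complete contractivity of $\varphi$ (strictness then comes from positivity of $-t\log t$ on $(0,1)$ and faithfulness of $\trr$, not from strict convexity); (ii) because your test state lives on $M_k(B)$ rather than on $B\tensor B^{op}$, your ``$\zeta$-adjustment'' bookkeeping is exactly the content of the paper's Corollary~\ref{Q}: the adjusted entropies computed in $B\tensor M_k(\CC)$ and in $B\tensor B^{op}$ differ by $\tr(d\log(1\tensor\zeta_{B^{op}}))-\log k$, a correction that is the same for $\mu$ and for $\mu\circ(\varphi\tensor\id)$ by unitality of $\varphi$ and so cancels in the difference; once that is written out, your test state does witness failure of $(2)$ on an honest $M_k(B)$.

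Your forward direction is a genuinely different route, and it is here that you have a repairable but real imprecision. The paper proves $(1)\Rightarrow(2)$ with no structure theory: it shows $\varphi(\varphi^\ddagger(b))\geq b$ for positive $b$ (Proposition~\ref{N}) and then uses operator monotonicity of the logarithm together with the fact that homomorphisms commute with functional calculus. Your multiplicity-matrix argument can be completed, but not from the facts you name: ``a pinching does not decrease entropy'' ($S(\mathcal{P}(\rho))\geq S(\rho)$) points the wrong way, since you must bound the entropy \emph{gain} of $\nu\circ\theta$ from above. The block-by-block computation needs (a) the mixing upper bound $S(\sum_k\lambda_k\tau_k)\leq\sum_k\lambda_kS(\tau_k)+H(\lambda)$ for aggregating blocks, (b) the Araki--Lieb-type bound $S(\tr_2\rho)\leq S(\rho)+\log d_2$ for tracing out multiplicity spaces, and (c) Lindblad's inequality $\sum_i p_iS(\rho_i)\leq S(\rho)$ for the pinching step, followed by a Jensen/log-sum step $\sum_i p_i\log(\mu_i n_i/p_i)\leq\log(\sum_i\mu_i n_i)=\log m$ to absorb the Shannon terms into $\nu(\log\zeta_D-\log\theta(\zeta_C))$; cruder substitutes such as $S(\mathcal{P}(\rho))\leq S(\rho)+\log(\#\mathrm{blocks})$ demonstrably do not suffice. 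With (a)--(c) the inequality closes, so your approach trades the paper's short, uniform operator-monotonicity argument for a more concrete one that shows exactly where each $\log$-dimension quantum of entropy can be created, at the cost of the structure theorem and sharper entropy inequalities.
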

\noindent We might paraphrase this equivalence by saying that ``a quantum operation between finite physical systems is dual to a homomorphism iff it is completely entropy-nonincreasing.'' Here, we have used the term ``quantum operation'' in the sense of Kraus \cite{Kraus}, and we have used the term ``entropy-nonincreasing'' to mean that the \emph{adjusted} entropy does not increase. This may be the first characterization of unital $*$-homomorphisms in terms of physical notions.

We close this introduction with six remarks.

\begin{remark}
Theorem~\ref{A} is certainly \emph{not} true with the unadjusted entropy $S$ in place of the adjusted entropy $\tilde S$. Let $A = \CC^2$, let $B = M_2(\CC)$, and let $\varphi(a_1, a_2) = \left(\begin{smallmatrix} a_1 & 0 \\ 0 & a_2\end{smallmatrix}\right)$. Let $k = 1$, and let $\mu\left(\begin{smallmatrix}  b_{11} & b_{12} \\ b_{21} & b_{22}\end{smallmatrix}\right) =  \frac 1 2(b_{11} + b_{12} + b_{21} + b_{22})$. The state $\mu$ is pure, and hence, $S(\mu) = 0$. However, the state $\mu \circ \varphi$ is uniform, and hence, $S(\mu \circ \varphi) = \log 2$. Thus, $S(\mu \circ \varphi) \not \leq S(\varphi)$. Nevertheless, $\varphi$ is a homomorphism.
\end{remark}

\begin{remark}\label{B}
Theorem~\ref{A} has no obvious analogue for infinite-dimensional $C^*$-algebras or for infinite-dimensional von Neumann algebras. Let $\H$ be a Hilbert space. If $\H$ is finite-dimensional, then the adjusted entropy of a pure state on $\K(\H) = \B(\H)$ is $\log(\dim \H)$. Thus, if $\H$ is infinite-dimensional, then the adjusted entropy of a pure state on $\K(\H)$ or, equivalently, of a pure normal state on $\B(\H)$ appears to be infinite.
\end{remark}

\begin{remark}
The adjusted entropy of a state is a well motivated notion without Theorem~\ref{A}. It is the notion of entropy that we obtain by working with the adjusted trace $\trr(a) = \tr(a\zeta_A)$ instead of the unadjusted trace $\tr(a)$; see Definitions~\ref{K} and \ref{M}. This adjusted trace is the canonical trace on a finite-dimensional $C^*$-algebra in a couple of contexts.

In noncommutative geometry, finite-dimensional $C^*$-algebras are viewed as generalizing finite sets. The adjusted trace is then the corresponding generalization of counting measure. For illustration, counting measure is the Haar measure on any finite group, and similarly, the adjusted trace is the Haar state on any finite compact quantum group \cite{Woronowicz}*{appendix A.2}.

In categorical quantum mechanics, finite-dimensional $C^*$-algebras are viewed as special unitary dagger Frobenius algebras in the dagger compact closed category of finite-dimensional Hilbert spaces. In these dagger Frobenius algebras, the multiplication and comultiplication are adjoint for the inner-product that is obtained from the adjusted trace in the obvious way \cite{Vicary}*{Theorem~4.6}.
\end{remark}

\begin{remark}
The adjusted entropy $\tilde S (\mu)$ is closely related to the relative entropy $S(\mu || \nu)$ \cite{Umegaki}*{section~4}, where $\nu\: A \to \CC$ is the uniform state on $A$. Explicitly, $\nu(a) := \trr(a)/\dim A$ for all $a \in A$. We then have that $\tilde S(\mu) = \log(\dim A) - S(\mu||\nu)$.
\end{remark}

\begin{remark}
Theorem~\ref{A} suggests that the adjusted entropy $\tilde S$ is the right notion of entropy in the mathematical setting of finite-dimensional $C^*$-algebras. Is the adjusted entropy $\tilde S$ also the right notion of entropy in the physical setting of finite physical systems?

This question has empirical content. The term $\mu(\zeta_A)$ in the expression for $\tilde S(\mu)$ suggests that each superselection sector of a finite physical system carries intrinsic physical entropy according to its dimension. A state that maximizes $\tilde S$ assigns greater weight to superselection sectors of large dimension than a state that maximizes $S$.
\end{remark}

\begin{remark}
Theorem~\ref{A} may be restated in terms of trace-preserving completely positive maps, which are dual to unital completely positive maps. This perspective is more common in quantum information theory.

Let $A$ and $B$ be finite-dimensional $C^*$-algebras, and let $\psi\: A \to B$ be a trace-preserving completely positive map. Theorem~\ref{A} implies that $\psi^\dagger\: B \to A$ is a homomorphism iff $\tilde S((\psi \tensor \id)(d)) \leq \tilde S(d)$ for all positive integers $k$ and all density operators $d \in M_k(A)$. Here, we define $\psi^\dagger$ by $\tr(a \psi^\dagger(b)) = \tr(\psi(a)b)$ and $\tilde S(d) = - \tr(d \log d) + \tr(d \log (\zeta_A \tensor 1_k)) + \log(k)$.
\end{remark}

We refrain from choosing a specific base for the logarithm. Thus, the notation $\log(x)$ refers to the logarithm of $x$ with respect to an arbitrary fixed base. For convenience, we define a $C^*$-algebra to be nonzero. Thus, we always have that $1 \neq 0$.

\section{Adjusted Choi matrix}

Let $n_1, \ldots, n_\ell$, and $m$ be positive integers. Let $\varphi\: M_{n_1}(\CC) \oplus \ldots \oplus M_{n_\ell}(\CC) \to M_m(\CC)$
be a linear map.
Let $e$ be the projection matrix
$$
e = \sum_{k = 1}^\ell \sum_{i,j = 1}^{n_k} \frac 1 n_k e_{ijk} \tensor e_{ijk},
$$
where $e_{ijk}$ is the matrix unit
$0_{n_1} \oplus \cdots \oplus 0_{n_{k-1}} \oplus e_{ij} \oplus 0_{n_{k+1}} \oplus \cdots \oplus 0_{n_\ell}$.

\begin{definition}
The \emph{Choi matrix} and the \emph{adjusted Choi matrix} of $\varphi$ are
$$
c_\varphi = \sum_{k = 1}^\ell \sum_{i,j = 1}^{n_k} \varphi(e_{ijk}) \tensor e_{ijk},
\qquad
\tilde c_ \varphi = (\varphi \tensor \id)(e) = \sum_{k = 1}^\ell \sum_{i,j = 1}^{n_k} \frac 1 n_k \varphi(e_{ijk}) \tensor e_{ijk},$$
respectively.
\end{definition}

The Choi matrix $c_\varphi$ was introduced by Choi \cite{Choi}, following de Pillis \cite{DePillis}, in the case $\ell = 1$. Choi showed that $\varphi$ is completely positive iff $c_\varphi$ is positive \cite{Choi}*{Theorem~2}. We briefly verify that this equivalence holds for the adjusted Choi matrix $\tilde c_\varphi$.

\begin{proposition}
The map $\varphi$ is completely positive iff $\tilde c_\varphi$ is positive. 
\end{proposition}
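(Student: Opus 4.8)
The plan is to reduce the statement to Choi's original theorem, i.e.\ the case $\ell = 1$ already quoted in the excerpt. The two ingredients are a reduction of complete positivity over a direct sum to complete positivity of the restrictions to the individual summands, and the observation that both Choi matrices are block-diagonal along the second tensor factor. First I would note that $\varphi$ is completely positive iff each restriction $\varphi_k \: M_{n_k}(\CC) \to M_m(\CC)$, given by $\varphi_k(a) = \varphi(0 \oplus \cdots \oplus a \oplus \cdots \oplus 0)$ with $a$ in the $k$-th slot, is completely positive. This holds because a positive element of
$$
M_N(M_{n_1}(\CC) \oplus \cdots \oplus M_{n_\ell}(\CC)) = M_N(M_{n_1}(\CC)) \oplus \cdots \oplus M_N(M_{n_\ell}(\CC))
$$
is precisely a direct sum of positive elements, one in each summand, and $\varphi \tensor \id_{M_N(\CC)}$ acts as $\varphi_k \tensor \id_{M_N(\CC)}$ on the $k$-th summand.

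Next I would observe that, because the matrix units $e_{ijk}$ lie in pairwise orthogonal blocks as $k$ varies, both $c_\varphi$ and $\tilde c_\varphi$ are block-diagonal with respect to the decomposition
$$
M_m(\CC) \tensor (M_{n_1}(\CC) \oplus \cdots \oplus M_{n_\ell}(\CC)) = \bigoplus_{k=1}^\ell \bigl( M_m(\CC) \tensor M_{n_k}(\CC) \bigr).
$$
The $k$-th diagonal block of $c_\varphi$ is exactly the ordinary Choi matrix $c_{\varphi_k} = \sum_{i,j=1}^{n_k} \varphi_k(e_{ij}) \tensor e_{ij}$, and the $k$-th diagonal block of $\tilde c_\varphi$ is $\tfrac{1}{n_k} c_{\varphi_k}$.

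Finally, positivity of a block-diagonal operator is equivalent to positivity of each of its diagonal blocks, and multiplying a block by the positive scalar $1/n_k$ does not affect its positivity. Hence $\tilde c_\varphi$ is positive iff $c_{\varphi_k}$ is positive for every $k$, which by Choi's theorem (the case $\ell = 1$) holds iff each $\varphi_k$ is completely positive, which by the first step holds iff $\varphi$ is completely positive. I do not anticipate a genuine obstacle here; the only point requiring care is the bookkeeping that identifies the $k$-th block of $\tilde c_\varphi$ with a positive rescaling of $c_{\varphi_k}$, together with the simple but essential observation that this rescaling is harmless precisely because it is constant within each block.
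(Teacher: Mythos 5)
Your proof is correct and follows essentially the same route as the paper: both arguments reduce to Choi's theorem via the block-diagonal decomposition of $\tilde c_\varphi$ along the second tensor factor, and your direct-sum reduction of complete positivity supplies exactly the detail behind the paper's closing phrase ``more generally, $\varphi$ is completely positive.'' The only divergence is in the forward direction, where the paper bypasses Choi's theorem entirely by noting that $e$ is a projection, hence positive, so $\tilde c_\varphi = (\varphi \tensor \id)(e)$ is positive whenever $\varphi$ is completely positive --- slightly shorter, but your symmetric two-way chain is equally valid.
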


\begin{proof}
The matrix $e$ is positive because it is a projection. If $\varphi$ is completely positive, then the adjusted Choi matrix $\tilde c_\varphi = (\varphi \tensor \id)(e)$ is immediately positive too.

Conversely, assume that $\tilde c_\varphi$ is positive. For each $1 \leq k \leq \ell$, let $$\pi_k\: M_{n_k}(\CC) \to M_{n_1}(\CC) \oplus \ldots \oplus M_{n_\ell}(\CC)$$ be the obvious inclusion map. Thus, $\pi_k(e_{ij}) = e_{ijk}$. The matrix
$$
\sum_{i,j = 1}^{n_k} \frac 1 n_k \varphi(\pi_k(e_{ij})) \tensor e_{ij}
$$
is positive because it is a block of $\tilde c_\varphi$. By Choi's theorem \cite{Choi}*{Theorem~2}, we conclude that $\varphi \circ \pi_k$ is completely positive for all $k$ and, more generally, that $\varphi$ is completely positive.
\end{proof}

\begin{theorem}\label{C}
Assume that $\varphi\:M_{n_1}(\CC) \oplus \ldots \oplus M_{n_\ell}(\CC) \to M_m(\CC)$ is completely positive and completely contractive \cite{EffrosRuan}*{section~2.2}. Then, $\varphi$ is a homomorphism iff $\tilde c_\varphi$ is a projection.
\end{theorem}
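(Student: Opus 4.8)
The plan is to treat the two implications very asymmetrically, since one is formal and the other carries all the content. The forward implication is immediate: if $\varphi$ is a $*$-homomorphism then so is $\varphi \tensor \id$, and a $*$-homomorphism sends the projection $e$ to a projection, namely $\tilde c_\varphi = (\varphi \tensor \id)(e)$. For the converse I would first reduce ``projection'' to ``idempotent'': since $\varphi$ is completely positive, the preceding proposition makes $\tilde c_\varphi$ positive, hence self-adjoint, so it is a projection iff $\tilde c_\varphi^2 = \tilde c_\varphi$. Expanding $\tilde c_\varphi^2$ and using $e_{ijk} e_{i'j'k'} = \delta_{kk'}\delta_{ji'} e_{ij'k}$ in the second tensor leg, together with uniqueness of the matrix-unit expansion in $M_m(\CC) \tensor A$, I would record that idempotency is equivalent to the family of identities
$$
\sum_{j=1}^{n_k} \varphi(e_{ijk})\varphi(e_{jpk}) = n_k\, \varphi(e_{ipk}) \qquad (\text{all } i, p, k).
$$
A point worth flagging now is that these identities involve only products \emph{within} a single block; cross-block data is invisible to them, and must be recovered separately at the end.

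Next I would reduce to a single block. Fixing $k$ and setting $\psi = \varphi \circ \pi_k \: M_{n_k}(\CC) \to M_m(\CC)$, which is again completely positive and completely contractive, the identities above say exactly that $\psi$ is ``idempotent'' in the same sense. Choosing a minimal Kraus decomposition $\psi(a) = \sum_\alpha V_\alpha a V_\alpha^*$ with $\{V_\alpha\}$ linearly independent, a short computation (using $\sum_j e_{ij}(V_\alpha^* V_\beta) e_{jp} = \tr(V_\alpha^* V_\beta)\, e_{ip}$) turns the idempotency identities into $\sum_{\alpha\beta} \tr(V_\alpha^* V_\beta)\, V_\alpha a V_\beta^* = n_k \sum_\alpha V_\alpha a V_\alpha^*$ for all $a$. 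Since minimality makes the superoperators $a \mapsto V_\alpha a V_\beta^*$ linearly independent, this forces the Gram relations $\tr(V_\alpha^* V_\beta) = n_k\, \delta_{\alpha\beta}$.

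The crux is then to upgrade this trace-level relation to an operator-level one, and this is where I expect the real obstacle to lie. I would stack the Kraus operators into $T = [V_1\ \cdots\ V_N]$. Complete contractivity forces $TT^* = \psi(1_{n_k}) \leq 1$ (for a completely positive map the completely bounded norm equals $\|\psi(1)\|$), so $T$ is a contraction and $T^*T \leq 1$; meanwhile the Gram relations say precisely that $\tr(T^*T) = n_k N$, the dimension of the space on which $T^*T$ acts. A positive contraction whose trace equals the dimension must be the identity, so $T^*T = 1$, i.e.\ $V_\alpha^* V_\beta = \delta_{\alpha\beta} 1_{n_k}$. This is exactly the condition making $\psi$ multiplicative, and with $*$-preservation (automatic from complete positivity) $\psi$ is a $*$-homomorphism. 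The delicate part is precisely this rigidity: idempotency alone yields only a trace identity, and promoting it to the operator identity is what consumes the contractivity hypothesis.

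Finally I would reassemble the blocks. Each restriction $\varphi \circ \pi_k$ is now a $*$-homomorphism, so $p_k := \varphi(\pi_k(1_{n_k}))$ is a projection. Complete contractivity gives $\sum_k p_k = \varphi(1_A) \leq 1$, and a sum of projections bounded by $1$ must consist of mutually orthogonal projections (pairwise, $p_k + p_{k'} \leq 1$ gives $p_k p_{k'} = 0$). Orthogonality of the $p_k$ annihilates every cross-block product $\varphi(x)\varphi(y)$ with $x, y$ in different blocks, which is exactly the information absent from the idempotency identities. Hence $\varphi$ is multiplicative on all of $A$, and therefore a $*$-homomorphism.
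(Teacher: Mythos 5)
Your proof is correct, and it takes a genuinely different route from the paper's. The paper handles the converse in one stroke with the multiplicative-domain theorem for completely positive, completely contractive maps \cite{EffrosRuan}*{Corollary~5.2.2}: since $\tilde c_\varphi$ is a projection, $(\varphi \tensor \id)(e)^*(\varphi \tensor \id)(e) = (\varphi \tensor \id)(e^*e)$, hence $(\varphi \tensor \id)(ae) = (\varphi \tensor \id)(a)(\varphi \tensor \id)(e)$ for all $a$, and evaluating at $a = e_{rst} \tensor e_{uvw}$ yields $\varphi(e_{rst}e_{vuw}) = \varphi(e_{rst})\varphi(e_{vuw})$ for \emph{all} index combinations, so within-block and cross-block multiplicativity fall out of the same computation. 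You instead expand idempotency directly, which (as you rightly flag) sees only within-block products, and you then need two separate devices: per block, the minimal-Kraus rigidity argument (linear independence of the superoperators $a \mapsto V_\alpha a V_\beta^*$ forces the Gram relations $\tr(V_\alpha^* V_\beta) = n_k \delta_{\alpha\beta}$, and then the positive contraction $T^*T$ having full trace forces $T^*T = 1$); and globally, the mutual orthogonality of the projections $p_k$ forced by $\sum_k p_k = \varphi(1) \leq 1$, which recovers the cross-block products. All of these steps are sound, including the two that could plausibly have failed: the maps $a \mapsto V_\alpha a V_\beta^*$ are indeed linearly independent when the $V_\alpha$ are, and the corner identity $\varphi(\pi_k(x)) = p_k \varphi(\pi_k(x)) p_k$ does annihilate cross terms once $p_k p_{k'} = 0$. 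The trade-off: the paper's argument is shorter but leans on a cited theorem (essentially Choi's multiplicative-domain result), whereas yours is longer but self-contained modulo Choi's theorem, makes explicit exactly where contractivity is consumed (once per block, once globally), and produces a structural normal form as a by-product --- each block of $\varphi$ is implemented by isometries with mutually orthogonal ranges.
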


\begin{proof}

If $\varphi$ is a homomorphism, then $\tilde c_\varphi = (\varphi \tensor \id)(e)$ is a projection because $e$ is a projection.

Conversely, assume that $\tilde c_\varphi$ is a projection. Then, $\varphi \tensor \id$ is completely positive, completely contractive, and satisfies $(\varphi \tensor \id)(e)^* (\varphi \tensor \id)(e) = (\varphi \tensor \id)(e^*e)$. It follows that $(\varphi \tensor \id)(ae) = (\varphi \tensor \id) (a) (\varphi \tensor \id)(e)$ for all $a \in (M_{n_1}(\CC) \oplus \ldots \oplus M_{n_\ell}(\CC))^{\tensor 2}$ \cite{EffrosRuan}*{Corollary~5.2.2}. Taking $a = e_{rst} \tensor e_{uvw}$ for some $1 \leq r, s \leq n_t$ and $1 \leq u, v \leq n_w$ with $1 \leq t, w \leq \ell$, we find that
$$\sum_{k = 1}^\ell \sum_{i,j = 1}^{n_k} \frac 1 n_k \varphi (e_{rst} e_{ijk}) \tensor e_{uvw} e_{ijk} = \sum_{k = 1}^\ell \sum_{i,j = 1}^{n_k} \frac 1 n_k \varphi (e_{rst}) \varphi(e_{ijk}) \tensor e_{uvw} e_{ijk},$$
$$\sum_{i,j = 1}^{n_w} \frac 1 n_w \varphi (e_{rst} e_{ijw}) \tensor  e_{uvw} e_{ijw} = \sum_{i,j = 1}^{n_w} \frac 1 n_w \varphi (e_{rst}) \varphi(e_{ijw}) \tensor e_{uvw} e_{ijw},$$
$$\sum_{j = 1}^{n_w} \frac 1 n_w \varphi (e_{rst} e_{vjw}) \tensor e_{uvw} e_{vjw} = \sum_{j = 1}^{n_w} \frac 1 n_w \varphi (e_{rst}) \varphi(e_{vjw}) \tensor e_{uvw} e_{vjw},$$
$$\sum_{j = 1}^{n_w} \frac 1 n_w \varphi (e_{rst} e_{vjw}) \tensor e_{ujw} = \sum_{j = 1}^{n_w} \frac 1 n_w \varphi (e_{rst}) \varphi(e_{vjw}) \tensor e_{ujw},$$
$$\frac 1 n_w \varphi (e_{rst} e_{vuw}) \tensor e_{uuw} =  \frac 1 n_w \varphi (e_{rst}) \varphi(e_{vuw}) \tensor e_{uuw},$$
$$
\varphi (e_{rst} e_{vuw}) = \varphi (e_{rst}) \varphi(e_{vuw}).
$$
Since $\{e_{ijk} \suchthat 1 \leq i, j \leq n_k,\, 1 \leq k \leq \ell\}$ is a basis for $M_{n_1}(\CC) \oplus \ldots \oplus M_{n_\ell}(\CC)$, we conclude that $\varphi$ is a homomorphism.
\end{proof}

\begin{corollary}\label{D}
Let $n$ and $m$ be positive integers, and let $\varphi\: M_n(\CC) \to M_m(\CC)$ be a unital completely positive map. Then, $\varphi$ is a homomorphism iff the matrix
$$
\tilde c_\varphi = \frac 1 n \sum_{i, j = 1}^n \varphi(e_{ij})\tensor e_{ij}
$$
is a projection.
\end{corollary}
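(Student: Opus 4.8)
The plan is to recognize that Corollary~\ref{D} is nothing more than the special case of Theorem~\ref{C} in which $\ell = 1$ and $n_1 = n$. First I would check that the two statements of the adjusted Choi matrix agree in this case. With a single summand there is no need for the block index $k$, so the matrix unit $e_{ij1}$ is simply $e_{ij}$ and the factor $1/n_k$ is $1/n$. The general definition
$$
\tilde c_\varphi = \sum_{k = 1}^\ell \sum_{i,j = 1}^{n_k} \frac 1 {n_k} \varphi(e_{ijk}) \tensor e_{ijk}
$$
therefore collapses precisely to the expression $\frac 1 n \sum_{i,j=1}^n \varphi(e_{ij}) \tensor e_{ij}$ that appears in the corollary, so there is no discrepancy in the quantity whose projectivity is being tested.

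The only genuine gap between the hypotheses is that Theorem~\ref{C} requires $\varphi$ to be completely positive \emph{and} completely contractive, whereas Corollary~\ref{D} supplies only that $\varphi$ is unital and completely positive. To bridge this gap I would invoke the standard fact that a unital completely positive map between $C^*$-algebras is automatically completely contractive. Concretely, for any completely positive map one has $\|\varphi\|_{\mathrm{cb}} = \|\varphi(1)\|$, so unitality forces $\|\varphi\|_{\mathrm{cb}} = \|1\| = 1$, which is exactly complete contractivity. Having verified both hypotheses, I would then apply Theorem~\ref{C} directly to conclude that $\varphi$ is a homomorphism iff $\tilde c_\varphi$ is a projection.

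I do not expect a real obstacle here, since the corollary is a direct specialization of an already-proved theorem. The only point requiring any care is the passage from ``unital'' to ``completely contractive,'' and this is a textbook property of completely positive maps rather than anything requiring new argument. The index bookkeeping in the definition of $\tilde c_\varphi$ reduces to the single-block formula immediately upon substituting $\ell = 1$, so no further computation is needed.
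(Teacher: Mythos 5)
Your proposal is correct and matches the paper's proof exactly: the paper also derives Corollary~\ref{D} as the $\ell = 1$ case of Theorem~\ref{C}, citing the standard fact that every unital completely positive map is completely contractive (Effros--Ruan, Corollary~5.1.2). Your additional justification via $\|\varphi\|_{\mathrm{cb}} = \|\varphi(1)\|$ is a fine way to see that fact, and nothing else in your argument differs from the paper's.
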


\begin{proof}
This is the case $\ell = 1$, because every unital completely positive map is completely contractive \cite{EffrosRuan}*{Corollary~5.1.2}.
\end{proof}

The Choi-Jamio{\l}kowski isomorphism, which is also known as the channel-state duality, is a well-known one-to-one correspondence between trace-preserving completely positive maps $\psi \: M_n(\CC) \to M_m(\CC)$ and certain density matrices $d \in M_m(\CC) \tensor M_n(\CC)$; see \cites{Choi, Jamiolkowski, JiangLuoFu} and \cite{qubitguide}*{section 9.9}. It associates each ``channel'' $\psi$ to the ``state'' $d = (\psi \tensor \id)(e) = \tilde c_\psi$. 

\begin{corollary}
Let $n$ and $m$ be positive integers, and let $\psi \: M_m(\CC) \to M_n(\CC)$ be a trace-preserving completely positive map. Then, the Choi-Jamio{\l}kowski isomorphism maps $\psi$ to the density matrix $\tilde c_\psi$, and $\psi^\dagger$ is a homomorphism iff $\frac m n \tilde c_\psi$ is a projection.
\end{corollary}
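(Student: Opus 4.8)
The plan is to reduce the corollary to Corollary~\ref{D} applied to the adjoint $\psi^\dagger\: M_n(\CC) \to M_m(\CC)$. First I would check that $\psi^\dagger$ is a unital completely positive map, so that Corollary~\ref{D} is available. Unitality is immediate from trace-preservation: for every $a$ we have $\tr(a\,\psi^\dagger(1)) = \tr(\psi(a)\cdot 1) = \tr(\psi(a)) = \tr(a) = \tr(a\cdot 1)$, so $\psi^\dagger(1) = 1$. Complete positivity of $\psi^\dagger$ is standard, and in any case falls out of the Choi-matrix computation below. Corollary~\ref{D} then says that $\psi^\dagger$ is a homomorphism iff $\tilde c_{\psi^\dagger} = \frac 1n\sum_{i,j=1}^n \psi^\dagger(e_{ij})\tensor e_{ij}$ is a projection. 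I would also dispatch the channel-state claim here: $\tilde c_\psi$ is positive because $\psi$ is completely positive, and $\tr(\tilde c_\psi) = \frac 1m\sum_{i}\tr(\psi(e_{ii})) = \frac 1m\tr(\psi(1_m)) = \frac 1m\tr(1_m) = 1$ by trace-preservation, so $\tilde c_\psi = (\psi\tensor\id)(e)$ is exactly the density matrix associated to $\psi$ by the Choi-Jamio{\l}kowski correspondence, as asserted.

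The core of the argument is to relate $\tilde c_{\psi^\dagger}\in M_m(\CC)\tensor M_n(\CC)$ to $\tilde c_\psi\in M_n(\CC)\tensor M_m(\CC)$ by a direct computation of matrix entries. Writing $\psi(e_{ab}) = \sum_{c,d}\psi_{cd,ab}\,e_{cd}$ and unwinding the defining relation $\tr(a\,\psi^\dagger(b)) = \tr(\psi(a)\,b)$ on matrix units via the identity $\tr(e_{ab}M) = M_{ba}$, I expect to obtain $(\psi^\dagger(e_{ij}))_{pq} = \psi_{ji,qp}$. It is worth emphasizing that the pairing $\tr(xy)$ is bilinear, so no complex conjugation enters and $\psi^\dagger$ is a genuine transpose of $\psi$ rather than a conjugate-transpose. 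Substituting this into the definitions of the two adjusted Choi matrices should yield the clean relation
$$
\tilde c_{\psi^\dagger} = \tfrac mn\,(F\circ T)(\tilde c_\psi),
$$
where $T$ denotes the transpose on $M_n(\CC)\tensor M_m(\CC)$ and $F\: M_n(\CC)\tensor M_m(\CC)\to M_m(\CC)\tensor M_n(\CC)$ denotes the flip $F(X\tensor Y) = Y\tensor X$; that is, $\tilde c_{\psi^\dagger}$ is gotten from $\frac mn\tilde c_\psi$ by transposing and swapping the two tensor legs. The same identity at the level of unadjusted Choi matrices reads $c_{\psi^\dagger} = (F\circ T)(c_\psi)$, which delivers complete positivity of $\psi^\dagger$ for free, once one knows that $F$ and $T$ preserve positivity.

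To conclude, I would note that both $F$ and $T$ are $\RR$-linear bijections carrying projections to projections. The flip $F$ is a $*$-isomorphism; the transpose $T$ preserves projections because $(P^T)^2 = (P^2)^T = P^T$ and $(P^T)^* = \overline P = P^T$ for any projection $P$. Since $F\circ T$ is linear, the displayed relation gives $(F\circ T)(\frac mn\tilde c_\psi) = \tilde c_{\psi^\dagger}$, so $\frac mn\tilde c_\psi$ is a projection iff $\tilde c_{\psi^\dagger}$ is a projection. Combined with Corollary~\ref{D} applied to $\psi^\dagger$, this yields the stated equivalence: $\psi^\dagger$ is a homomorphism iff $\frac mn\tilde c_\psi$ is a projection.

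I expect the only real obstacle to be the index bookkeeping in the middle step---pinning down the scalar $\frac mn$ and the exact combination of flip and transpose, and confirming that the bilinear trace pairing produces a transpose with no conjugation. Everything else is a direct appeal to Corollary~\ref{D} together with the elementary fact that $F$ and $T$ preserve projections.
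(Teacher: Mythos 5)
Your proposal is correct and takes essentially the same route as the paper: both arguments relate $\tilde c_{\psi^\dagger}$ to $\frac mn \tilde c_\psi$ by the tensor flip combined with an entrywise operation, observe that this combination preserves projections, and then invoke Corollary~\ref{D} applied to the unital completely positive map $\psi^\dagger$. The only difference is cosmetic: the paper computes with the sesquilinear Hilbert--Schmidt inner product and obtains the \emph{conjugate} of the flipped matrix, while you compute with the bilinear trace pairing and obtain the \emph{transpose}; these coincide here because the matrices involved are self-adjoint.
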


\begin{proof}
The Choi-Jamio{\l}kowski isomorphism maps $\psi$ to $\tilde c_\psi$ by its definition. We write $\sigma$ for the canonical $*$-isomorphism $M_n(\CC) \tensor M_m(\CC) \to M_m(\CC) \tensor M_n(\CC)$. Defining $\<a|b\> = \tr(a^*b)$, we calculate that for all $1 \leq r, s \leq m$ and $1 \leq u, v \leq n$,
\begin{align*}
\< n \tilde c_{\psi^\dagger} | e_{rs} \tensor e_{uv}\>
& =
\sum_{i,j =1}^n \< \psi^\dagger(e_{ij}) \tensor e_{ij} | e_{rs} \tensor e_{uv} \>
=
\<\psi^\dagger(e_{uv}) |e_{rs}\>
=
\<e_{uv} | \psi(e_{rs})\>
\\ & =
\sum_{i,j = 1}^m \< e_{rs} \tensor e_{uv} | e_{ij} \tensor \psi(e_{ij})\>
=
\< e_{rs} \tensor e_{uv} | m\sigma(\tilde c_\psi)\>
=
\overline{\< m\sigma(\tilde c_\psi) | e_{rs} \tensor e_{uv} \>}.
\end{align*}
Thus, $\tilde c_{\psi^\dagger}$ is the conjugate of the matrix $\sigma \big(\frac m n \tilde c_\psi \big)$. We now reason that $\psi^\dagger$ is a homomorphism iff $\tilde c_{\psi^\dagger}$ is a projection iff $\sigma \big(\frac m n \tilde c_\psi \big)$ is a projection iff $\frac m n \tilde c_\psi$ is a projection, using Corollary~\ref{D}.
\end{proof}

\section{Heisenberg picture}

Let $A$ be a finite-dimensional $C^*$-algebra.

\begin{definition}
 A \emph{trace} on $A$ is a positive linear map $\mu \: A \to \CC$ such that $\mu(ab) = \mu(ba)$. We specify two traces $\tr, \trr\: A \to \CC$ as follows:
\begin{enumerate}
\item $\tr(p) = 1$ for each minimal projection $p \in A$,
\item $\trr(p) = \dim A p$ for each projection $p \in A$.
\end{enumerate}
These two traces are related by $\trr(a) = \tr(a \zeta_A)$ for a unique \emph{dimension operator} $\zeta_A \in A$. Explicitly, if $A = M_{n_1}(\CC) \oplus \cdots \oplus M_{n_\ell}(\CC)$, then $\zeta_A = n_1 1_{n_1} \oplus \cdots \oplus n_\ell 1_{n_\ell}$.
\end{definition}

The dimension operator evidently satisfies 
\begin{enumerate}
\item $\zeta_{A \oplus B} = \zeta_A \oplus \zeta_B$,
\item $\zeta_{A \otimes B} = \zeta_A \otimes \zeta_B$,
\item $\zeta_{A^{op}} = \zeta_A$.
\end{enumerate}

Recall that $A^{op}$ is the $C^*$-algebra that has the same underlying vector space as $A$ but whose multiplication reverses the order of the multiplication in $A$. Thus, $A$ and $A^{op}$ are equal as $C^*$-algebras iff $A$ is commutative, but $A$ and $A^{op}$ are always equal as vector spaces. Furthermore, because the tensor product of $A$ and $B$ is their tensor product as vector spaces, the $C^*$-algebras $A \tensor B$, $A^{op} \tensor B$, $A \tensor B^{op}$, and $A^{op} \tensor B^{op}$ are all equal as vector spaces. Furthermore, $A^{op} \tensor B^{op}$ and $(A \tensor B)^{op}$ are equal as $C^*$-algebras.

\begin{definition}\label{E}
Let $\delta$ be the maximum projection in $A \tensor A^{op}$ that is orthogonal to $p \tensor (1-p)$ for all projections $p \in A$. We say that $\delta$ is the \emph{diagonal projection} \cite{quantumlogic}.
\end{definition}

We faithfully represent $A \tensor A^{op}$ on the Hilbert space $A$, which we equip with the inner product $(a|b) = \trr(a^* b)$.
Operators of the form $a \tensor 1$ become multiplication on the left by $a$, and operators of the form $1 \tensor a$ become multiplication on the right by $a$. This inner product on $A$ has appeared previously in the literature, e.g., in the context of finite-dimensional $C^*$-algebras as certain dagger Frobenius algebras in the dagger category of finite-dimensional Hilbert spaces \cite{Vicary}.

\begin{proposition}\label{F}
Let $A \tensor A^{op}$ be represented on $A$. The range of $\delta$ is the center of $A$.
\end{proposition}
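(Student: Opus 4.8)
The plan is to reduce the computation of the range to the simple summands of $A \tensor A^{op}$, on which the relevant representation is faithful, using the central decomposition to transport the algebraic conditions on $\delta$ to the Hilbert space $A$. Write $A = M_{n_1}(\CC) \oplus \cdots \oplus M_{n_\ell}(\CC)$, so that $A \tensor A^{op} = \bigoplus_{k,k'} M_{n_k}(\CC) \tensor M_{n_{k'}}(\CC)^{op}$ with minimal central projections $z_{kk'} = 1_{n_k} \tensor 1_{n_{k'}}$, and write $\rho$ for the representation, so that $\rho(a \tensor b)$ is the operator $x \mapsto a x b$. The key preliminary observation is that $\rho$ is \emph{not} faithful when $\ell > 1$: the summand indexed by $(k,k')$ annihilates $A$ whenever $k \neq k'$, since $a x b = 0$ for $a$ supported in block $k$ and $b$ in block $k'$, while for $k = k'$ the summand $M_{n_k}(\CC) \tensor M_{n_k}(\CC)^{op}$ restricts to a $*$-isomorphism onto $\B(M_{n_k}(\CC))$, where the block $M_{n_k}(\CC)$ carries the inner product inherited from $A$. (The image is all of $\B(M_{n_k}(\CC))$ by the standard fact that the only operators commuting with every left and every right multiplication are the scalars, together with a dimension count.) Because the $z_{kk'}$ are central, the maximal projection $\delta$ orthogonal to every $p \tensor (1-p)$ decomposes as $\delta = \sum_{k,k'} z_{kk'}\delta$, where $z_{kk'}\delta$ is the maximal projection of the $(k,k')$-summand orthogonal to $(p \tensor (1-p))z_{kk'} = p_k \tensor (1_{n_{k'}} - p_{k'})$ for all projections $p \in A$, with $p_k$ the block-$k$ component of $p$.

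For $k \neq k'$ the components $p_k$ and $p_{k'}$ vary independently, so taking $p_k = 1_{n_k}$ and $p_{k'} = 0$ produces the full unit $z_{kk'}$ of the summand; then $z_{kk'}\delta$ must be orthogonal to the identity, forcing $z_{kk'}\delta = 0$. Thus only the diagonal summands contribute, and on each of them I am reduced to the faithful case. Via the $*$-isomorphism $M_{n_k}(\CC)\tensor M_{n_k}(\CC)^{op} \iso \B(M_{n_k}(\CC))$, the projection $z_{kk}\delta$ is the maximal projection orthogonal to every $\rho(q \tensor (1_{n_k} - q))$, an operator whose range is $q M_{n_k}(\CC)(1_{n_k}-q)$ as $q$ ranges over the projections of $M_{n_k}(\CC)$. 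A projection is orthogonal to these exactly when its range is orthogonal to each such subspace, so $z_{kk}\delta$ is the orthogonal projection onto $\bigl(\sum_q q M_{n_k}(\CC)(1_{n_k}-q)\bigr)^\perp$.

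It then remains to identify this orthogonal complement with the center $\CC 1_{n_k}$ of the block, which is the computational heart of the argument. For $x \in M_{n_k}(\CC)$, orthogonality to $q a (1_{n_k}-q)$ for all $a$ means $\trr\bigl((1_{n_k}-q)x^* q\, a\bigr) = 0$ for all $a$ by the trace property, hence $(1_{n_k}-q)x^* q = 0$ by faithfulness of $\trr$; equivalently $q x (1_{n_k}-q) = 0$, i.e. $qx = qxq$, for every projection $q$. Replacing $q$ by $1_{n_k}-q$ gives $xq = qxq$, so $qx = xq$ for all projections $q$, and since projections span $M_{n_k}(\CC)$ this forces $x \in \CC 1_{n_k}$; the converse inclusion is immediate. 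Assembling the summands, $\rho(\delta)$ is the orthogonal projection onto $\bigoplus_k \CC 1_{n_k}$, which is exactly the center of $A$. The main obstacle is precisely the failure of faithfulness flagged at the outset: orthogonality and maximality of $\delta$ are conditions inside $A \tensor A^{op}$, whereas the range lives in $\B(A)$, and it is the central decomposition that lets me transport the former to the latter on the diagonal blocks, where the representation is an isomorphism.
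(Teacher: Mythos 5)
Your proof is correct, and it takes a genuinely different---and in one respect more careful---route than the paper's. The paper argues globally and basis-free: if $a$ lies in the range of $\delta$, then $a$ lies in the kernel of every $p \tensor (1-p)$, so $pa(1-p) = 0$ and (replacing $p$ by $1-p$) $(1-p)ap = 0$, whence $a$ commutes with all projections and is central; conversely, a central $a$ is killed by every $p \tensor (1-p)$ and is therefore in the range of $\delta$ by maximality. That last step silently uses that the representation converts the abstract maximality of $\delta$ inside $A \tensor A^{op}$ into the Hilbert-space statement that the range of the image of $\delta$ equals the common kernel of the operators $x \mapsto p x (1-p)$---equivalently, that the representation carries $\bigvee_p p \tensor (1-p)$ to the supremum of the image projections. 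This is exactly the point your central decomposition is built to justify: you observe, correctly (and in contradiction to the paper's own phrase ``we faithfully represent''), that the representation annihilates the off-diagonal summands $M_{n_k}(\CC) \tensor M_{n_{k'}}(\CC)^{op}$ with $k \neq k'$, you check that $\delta$ has no component there anyway, and you transport maximality through the diagonal summands, where the representation is an isomorphism onto $\B(M_{n_k}(\CC))$ and a maximal projection orthogonal to a family is literally the projection onto the orthocomplement of the sum of their ranges. The algebraic heart is then the same commutation trick as in the paper ($q x (1-q) = 0$ for all projections $q$, applied also with $1-q$ in place of $q$, forces $x$ central). What the paper's route buys is brevity and coordinate-freeness; what yours buys is an explicit proof of the lattice-theoretic transport that the paper glosses over (one can also justify it abstractly: $*$-homomorphisms between finite-dimensional $C^*$-algebras preserve support projections, hence suprema of projections), plus the by-product that $\delta$ is supported on the diagonal summands, anticipating the explicit formula of Lemma~\ref{H}.
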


\begin{proof}
Let $a \in A$. Assume that $a$ is in the range of $\delta$. Then, $a$ is in the kernel of $p \tensor (1-p)$ for all projections $p \in A$, and thus, $p a (1-p) = 0$ and $(1-p) a p = 0$ for all projections $p \in A$. We find that $a$ commutes with every projection in $A$, so $a$ is in the center of $A$. Conversely, if $a$ is in the center of $A$, then $p a (1-p) = 0$ for all projections $p \in A$, and thus, $a$ is in the range of $\delta$. Therefore, the range of $\delta$ is equal to the center of $A$.
\end{proof}

\begin{lemma}\label{G}
For all $a, b \in A$, we have that
$
\trr(\delta (a \tensor b)) = \trr(ab).
$
\end{lemma}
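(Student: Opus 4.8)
The plan is first to determine the diagonal projection $\delta$ explicitly as an element of $A \tensor A^{op}$, and then to evaluate both sides of the asserted identity by expanding everything in matrix units. Throughout write $A = M_{n_1}(\CC) \oplus \cdots \oplus M_{n_\ell}(\CC)$, and let $z_k = 0 \oplus \cdots \oplus 1_{n_k} \oplus \cdots \oplus 0$ be the central projection onto the $k$-th summand.

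First I would pin down $\delta$. By Proposition~\ref{F}, the range of $\delta$ as an operator on the Hilbert space $A$ is the center of $A$, which is spanned by the orthogonal family $z_1, \ldots, z_\ell$. Since $\delta$ is a projection in $A \tensor A^{op}$, it acts as the orthogonal projection onto the center for the inner product $(x \suchthat y) = \trr(x^* y)$. As $(z_k \suchthat z_k) = \trr(z_k) = n_k^2$, this orthogonal projection sends $x = x_1 \oplus \cdots \oplus x_\ell$ to $\bigoplus_k (\tr(x_k)/n_k)\, 1_{n_k}$. Using the matrix-unit identity $\sum_{i,j} e_{ij} x e_{ji} = \tr(x)\, 1$ inside each summand, I recognize this operator as the action of
$$
\delta = \sum_{k=1}^\ell \frac{1}{n_k} \sum_{i,j=1}^{n_k} e_{ijk} \tensor e_{jik}.
$$
One can verify directly that this element is a projection orthogonal to every $p \tensor (1-p)$ and is maximal as such, matching Definition~\ref{E}; in particular $\delta$ has no component in any off-diagonal summand $M_{n_k} \tensor M_{n_{k'}}^{op}$ with $k \neq k'$.

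Next I would record how the adjusted trace behaves on the tensor product. Because $\zeta_{A \tensor A^{op}} = \zeta_A \tensor \zeta_A$ and the unadjusted trace is multiplicative across tensor factors, a short computation using the multiplication $(x_1 \tensor y_1)(x_2 \tensor y_2) = x_1 x_2 \tensor y_2 y_1$ of $A \tensor A^{op}$ gives
$$
\trr_{A \tensor A^{op}}(u \tensor v) = \trr(u)\,\trr(v) \qquad (u, v \in A).
$$
Both sides of the lemma are bilinear in $(a, b)$, so it suffices to take $a = e_{rst}$ and $b = e_{uvw}$. Expanding $\delta(a \tensor b)$ by the closed form above and the multiplication rule, then applying this product formula and evaluating the products of matrix units, I expect both sides to equal $n_t$ when $t = w$, $s = u$, and $r = v$, and to vanish otherwise. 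For general $a, b$ this amounts to $\trr(\delta(a \tensor b)) = \sum_k n_k \tr(a_k b_k) = \trr(ab)$, which is the claim.

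The one genuinely delicate step is the explicit identification of $\delta$, and specifically keeping the normalizations straight. Proposition~\ref{F} supplies only the range of $\delta$; recovering the operator itself requires the orthogonal projection for the $\trr$-weighted inner product, whose normalization $(z_k \suchthat z_k) = n_k^2$ is exactly what produces the factors $1/n_k$. I would also resist the tempting shortcut of computing the left-hand side as the operator trace of the composite of $x \mapsto a x b$ with the projection onto the center: that operator trace equals $\sum_k (1/n_k)\tr(a_k b_k)$, not $\trr(ab)$. The discrepancy is a reminder that $\trr_{A \tensor A^{op}}$ is the intrinsic adjusted trace of the algebra and must be computed through the product formula above, rather than as an operator trace of this representation. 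Once $\delta$ is in hand, the remaining computation is routine.
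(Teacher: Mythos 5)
Your proof is correct, but it takes a genuinely different route from the paper's. The paper's proof of Lemma~\ref{G} never identifies $\delta$ explicitly: it writes $\trr(\delta(a \tensor b)) = \tr(\delta(a\tensor b)(\zeta_A \tensor \zeta_A)\delta)$ and evaluates this as a sum over the orthonormal basis $\{n_k^{-1}p_k\}$ of the range of $\delta$, which is the center by Proposition~\ref{F}; the sum telescopes to $(1\,|\,(a\tensor b)1) = \trr(ab)$. You instead first derive the closed form $\delta = \sum_k n_k^{-1}\sum_{i,j}e_{ijk}\tensor e_{jik}$ --- which is precisely the content of the paper's Lemma~\ref{H}, proved there \emph{after} Lemma~\ref{G} --- and then finish with an elementary matrix-unit computation using $\trr(u\tensor v) = \trr(u)\trr(v)$; I checked the computation and both sides do come out to $n_t\,$ exactly when $t=w$, $s=u$, $r=v$. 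What your route buys: once $\delta$ is in hand the rest is mechanical, and your closing warning is a real subtlety --- the operator trace of the representation on the Hilbert space $A$ agrees with the \emph{unadjusted} trace $\tr$ of $A \tensor A^{op}$ on the diagonal summands, not with $\trr$, so the ``shortcut'' really would give $\sum_k n_k^{-1}\tr(a_kb_k)$. What it costs: the identification of $\delta$ is the one place your argument is glossed. The representation of $A \tensor A^{op}$ on $A$ is \emph{not} faithful when $A$ is not simple (every off-diagonal summand $M_{n_k}(\CC)\tensor M_{n_{k'}}(\CC)^{op}$ with $k \neq k'$ acts as zero), so knowing that the image of $\delta$ is the orthogonal projection onto the center does not by itself determine $\delta$, and ``verify directly \ldots\ maximal as such'' is exactly the nontrivial part. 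The clean fix is the one you gesture at: taking $p = z_k$ in Definition~\ref{E} forces $\delta(z_k \tensor z_{k'}) = 0$ for $k \neq k'$, so $\delta$ lies in the diagonal summands, where the representation \emph{is} injective; since your candidate element also lies there and has the same image, the two coincide. With that one line made explicit your proof is complete; the same subtlety is latent in the paper (which asserts faithfulness of the representation), and is harmless there for the same reason.
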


\begin{proof}
Let $A = A_1 \oplus \cdots \oplus A_\ell$ be a decomposition of $A$ into simple $C^*$-algebras. Furthermore, let $1 = p_1 + \cdots + p_\ell$ and $\zeta_A = n_1 p_1 + \ldots + n_\ell p_\ell$ be the corresponding decompositions of $1 \in A$ and $\zeta_A \in A$, respectively. The range of $\delta$ is the center of $A$ by Proposition~\ref{F}, and hence, $\{p_1, \ldots, p_n\}$ is an orthogonal basis for the range of $\delta$. Since $(p_k|p_k) = \trr(p_k)= \tr(p_k\zeta_A) = n_k \tr(p_k) = n_k^2$, the set $\{n_1^{-1}p_1, \ldots, n_\ell^{-1}p_n\}$ is an orthonormal basis for the range of $\delta$.

We now calculate that 
\begin{align*}
\trr(\delta(a \tensor b))
& =
\tr(\delta(a \tensor b) \zeta_{A \tensor A^{op}})
=
\tr(\delta(a \tensor b) (\zeta_A \tensor \zeta_A))
 \\ & =
\tr(\delta(a \tensor b) (\zeta_A \tensor \zeta_A) \delta)
=
\sum_{k = 1}^\ell ( n_k^{-1}p_k|(a \tensor b) (\zeta_A \tensor \zeta_A)n_k^{-1}p_k)
\\ & =
\sum_{k = 1}^\ell n_k^{-2} ( p_k|(a \tensor b) (\zeta_A \tensor \zeta_A) p_k)
=
\sum_{k = 1}^\ell n_k^{-2} ( p_k|(a \tensor b) (\zeta_A p_k \zeta_A))
\\ & =
\sum_{k = 1}^\ell n_k^{-2} ( p_k|(a \tensor b) n_k^2 p_k)
=
\sum_{k = 1}^\ell  ( p_k|(a \tensor b) p_k)
=
\sum_{k = 1}^\ell  ( (p_k \tensor p_k) 1|(a \tensor b) p_k)
\\ & =
\sum_{k = 1}^\ell  (  1|(a \tensor b) (p_k \tensor p_k) p_k)
= 
\sum_{k = 1}^\ell  (  1|(a \tensor b) p_k)
=
(1 | (a \tensor b) 1) = ( 1 | a b) = \trr(ab).
\end{align*}
\end{proof}

\begin{lemma}\label{H}
Assume that  $A = M_{n_1}(\CC) \oplus \cdots \oplus M_{n_\ell}(\CC)$. Let $\tau$ be the $*$-isomorphism $A^{op} \to A$ that maps each matrix to its transpose. Then,
$$
(\id \tensor \tau)(\delta) = e = \sum_{k = 1}^\ell \sum_{i,j = 1}^{n_k} \frac 1 n_k e_{ijk} \tensor e_{ijk}.
$$
\end{lemma}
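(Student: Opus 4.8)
The plan is to move the problem into $A \tensor A^{op}$, where $\delta$ is defined, and then compare both sides as operators on the Hilbert space $A$. Since $\id \tensor \tau \: A \tensor A^{op} \to A \tensor A$ is a $*$-isomorphism, the desired identity $(\id \tensor \tau)(\delta) = e$ is equivalent to $\delta = (\id \tensor \tau)^{-1}(e)$. Write $\tilde\delta = (\id \tensor \tau)^{-1}(e)$. Because $e$ is a projection and $\id \tensor \tau$ is a $*$-isomorphism, $\tilde\delta$ is a projection in $A \tensor A^{op}$; hence, under the faithful representation of $A \tensor A^{op}$ on $A$, both $\delta$ and $\tilde\delta$ act as orthogonal projections. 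An orthogonal projection is determined by its range, so it suffices to show that $\delta$ and $\tilde\delta$ have the same range.

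First I would compute $\tilde\delta$ explicitly. Since $\tau$ is the transpose, $\tau(e_{jik}) = e_{ijk}$, so that $(\id \tensor \tau)^{-1}(e_{ijk} \tensor e_{ijk}) = e_{ijk} \tensor e_{jik}$, with the second factor regarded as an element of $A^{op}$. Under the representation on $A$, the first factor acts by left multiplication and the second by right multiplication by the underlying matrix, so $e_{ijk} \tensor e_{jik}$ sends $x \in A$ to $e_{ijk}\, x\, e_{jik}$. A direct matrix-unit computation gives $e_{ijk}\, x\, e_{jik} = (x_k)_{jj}\, e_{iik}$, where $x_k$ denotes the block of $x$ in $M_{n_k}(\CC)$. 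Summing over $i$, $j$, and $k$, and writing $p_k = \sum_i e_{iik}$ for the central projection onto the $k$-th block, I obtain
$$
\tilde\delta\, x = \sum_{k = 1}^\ell \frac{\tr(x_k)}{n_k}\, p_k.
$$

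From this formula the range of $\tilde\delta$ is contained in the span of $\{p_1, \ldots, p_\ell\}$, which is the center of $A$; and since $\tilde\delta\, p_k = p_k$ for each $k$, the range is exactly the center. By Proposition~\ref{F}, the range of $\delta$ is also the center of $A$. Two orthogonal projections with a common range coincide, so $\tilde\delta = \delta$, which is the claim. The only point requiring care is the bookkeeping that the second tensor factor lies in $A^{op}$ and therefore acts by right multiplication, together with the fact that $\tau$ transposes indices; once the action $x \mapsto e_{ijk}\, x\, e_{jik}$ is correctly identified, the remaining computation is routine and I anticipate no further obstacle.
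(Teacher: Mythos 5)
Your proof is correct and uses essentially the same ingredients as the paper's: the faithful representation of $A \tensor A^{op}$ on the Hilbert space $A$, Proposition~\ref{F}, and a matrix-unit computation. The only difference is direction --- the paper computes $\delta$ explicitly as the orthogonal projection onto the center and then pushes it forward through $\id \tensor \tau$, whereas you pull $e$ back through $(\id \tensor \tau)^{-1}$ and identify it with $\delta$ by noting that two orthogonal projections with the same range coincide; the underlying computations are identical, read in opposite orders.
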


\begin{proof}
The center of $A$ is $\CC 1_{n_1} \oplus \cdots \oplus \CC 1_{n_\ell}$. By Proposition~\ref{F}, this is also the range of the projection $\delta$ when $A \tensor A^{op}$ is represented on $A$. Using that $(1_{n}| 1_{n}) = \trr(1_n) = n^2$ for all positive integers $n$, we calculate that
\begin{align*}\delta(x_1 \oplus \cdots \oplus x_\ell)
& = 
\frac 1 {n_1^2} (1_{n_1} | x_1) 1_{n_1} \oplus \cdots \oplus \frac 1 {n_\ell^2} (1_{n_\ell}|x_\ell) 1_{n_\ell} 
\\ &=
\frac 1 {n_1} \tr(x_1) 1_{n_1} \oplus \cdots \oplus \frac 1 {n_\ell} \tr(x_\ell) 1_{n_\ell} 
\\ & = 
\bigoplus_{k=1}^\ell \frac 1 {n_k} \tr(x_k) 1_{n_k}
=
\bigoplus_{k=1}^\ell \sum_{i, j = 1}^{n_k} \frac 1 {n_k} (e_j^* x_k e_j) e_i e_i^*
\\ & = 
\bigoplus_{k=1}^\ell \sum_{i, j = 1}^{n_k} \frac 1 {n_k}  e_i e_j^* x_k e_j e_i^*
=
\bigoplus_{k=1}^\ell \sum_{i, j = 1}^{n_k} \frac 1 {n_k}  e_{ij} x_k e_{ji}
\\ & = 
\sum_{k=1}^\ell \sum_{i, j = 1}^{n_k} \frac 1 {n_k}  e_{ijk} (x_1 \oplus \cdots \oplus x_\ell) e_{jik}
\\ & = 
\sum_{k=1}^\ell \sum_{i, j = 1}^{n_k} \frac 1 {n_k}  (e_{ijk} \tensor e_{jik})(x_1 \oplus \cdots \oplus x_\ell)
\end{align*}
Therefore,
$$
\delta = \sum_{k=1}^\ell \sum_{i, j = 1}^{n_k} \frac 1 {n_k} e_{ijk} \tensor e_{jik}.
$$

We conclude that
$$
(\id \tensor \tau)(\delta) = \sum_{k=1}^\ell \sum_{i, j = 1}^{n_k} \frac 1 {n_k} e_{ijk} \tensor \tau(e_{jik})
=
\sum_{k=1}^\ell \sum_{i, j = 1}^{n_k} \frac 1 {n_k} e_{ijk} \tensor e_{ijk}
= e.
$$
\end{proof}

\begin{corollary}\label{I}
Let $A$ and $B$ be finite-dimensional $C^*$-algebras. Let $\varphi\: A \to B$ be a map that is completely positive and completely contractive. Then, $\varphi$ is a homomorphism iff $(\varphi \tensor \id)(\delta)$ is a projection.
\end{corollary}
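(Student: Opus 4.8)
The plan is to reduce the corollary to Theorem~\ref{C} in two moves: first trade the diagonal projection $\delta$ for the matrix $e$ of Lemma~\ref{H} via the transpose isomorphism, and then trade a general codomain $B$ for a single matrix algebra by projecting onto its simple summands.

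First I would fix an isomorphism $A \iso M_{n_1}(\CC) \oplus \cdots \oplus M_{n_\ell}(\CC)$, so that the transpose $*$-isomorphism $\tau\: A^{op} \to A$ and the projection $e$ are available. The key observation is that $\id \tensor \tau$ is a $*$-isomorphism of $B \tensor A^{op}$ onto $B \tensor A$, and that
$$
(\id \tensor \tau)\bigl((\varphi \tensor \id)(\delta)\bigr) = (\varphi \tensor \tau)(\delta) = (\varphi \tensor \id)\bigl((\id \tensor \tau)(\delta)\bigr) = (\varphi \tensor \id)(e) = \tilde c_\varphi,
$$
where the penultimate equality is Lemma~\ref{H}. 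Since a $*$-isomorphism carries projections to projections, $(\varphi \tensor \id)(\delta)$ is a projection iff $\tilde c_\varphi \coloneqq (\varphi \tensor \id)(e)$ is a projection. It therefore suffices to prove that $\varphi$ is a homomorphism iff $\tilde c_\varphi$ is a projection, now for an arbitrary finite-dimensional codomain $B$.

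For the second move I would write $B = M_{m_1}(\CC) \oplus \cdots \oplus M_{m_p}(\CC)$ and let $\rho_r\: B \to M_{m_r}(\CC)$ be the canonical projection onto the $r$-th summand, a unital $*$-homomorphism and hence completely positive and completely contractive. Each composite $\rho_r \circ \varphi\: A \to M_{m_r}(\CC)$ is then completely positive and completely contractive, so Theorem~\ref{C} gives that $\rho_r \circ \varphi$ is a homomorphism iff $\tilde c_{\rho_r \circ \varphi}$ is a projection. I would then exploit two compatibilities with the decomposition $B \tensor A = \bigoplus_r M_{m_r}(\CC) \tensor A$. On the one hand, $\tilde c_{\rho_r \circ \varphi} = (\rho_r \tensor \id)(\tilde c_\varphi)$ is precisely the $r$-th block of $\tilde c_\varphi$, so $\tilde c_\varphi$ is a projection iff every $\tilde c_{\rho_r \circ \varphi}$ is. On the other hand, $\varphi$ is a homomorphism iff every $\rho_r \circ \varphi$ is: the forward direction is a composition of homomorphisms, while the converse follows because $\rho_r(\varphi(ab)) = (\rho_r\circ\varphi)(ab) = (\rho_r\circ\varphi)(a)(\rho_r\circ\varphi)(b) = \rho_r(\varphi(a)\varphi(b))$ for every $r$, so that $\varphi(ab) = \varphi(a)\varphi(b)$ as the $\rho_r$ jointly separate points (adjoint-preservation is automatic from complete positivity). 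Chaining these three equivalences yields that $\varphi$ is a homomorphism iff $\tilde c_\varphi$ is a projection, which completes the proof.

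The computation in the first move is the conceptual heart, but it is routine once Lemma~\ref{H} is in hand. I expect the only delicate point to be the converse half of the last equivalence, namely recovering global multiplicativity of $\varphi$ from blockwise multiplicativity: here one must use that each $\rho_r$ is itself multiplicative, not merely linear, so that $\rho_r(\varphi(a)\varphi(b))$ factors as $(\rho_r\circ\varphi)(a)(\rho_r\circ\varphi)(b)$.
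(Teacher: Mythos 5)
Your proof is correct and takes essentially the same route as the paper: both reduce the statement to Theorem~\ref{C} by using Lemma~\ref{H} to identify $(\id \tensor \tau)\bigl((\varphi \tensor \id)(\delta)\bigr)$ with $(\varphi \tensor \id)(e) = \tilde c_\varphi$, so that projections correspond to projections under the $*$-isomorphism $\id \tensor \tau$. The only difference is that you make explicit, via the summand projections $\rho_r$ of $B$, the reduction to a simple codomain that the paper dispatches with ``without loss of generality.''
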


\begin{proof}
Without loss of generality, $A = M_{n_1}(\CC) \oplus \cdots \oplus M_{n_\ell}(\CC)$ and $B = M_m(\CC)$ for some positive integers $n_1, \ldots, n_\ell$, and $m$. By Theorem~\ref{C}, $\varphi$ is a homomorphism iff $(\varphi \tensor \id)(e)$ is a projection. By Lemma~\ref{H}, $(\varphi \tensor \id)(e) = (\varphi \tensor \id)((\id \tensor \tau)(\delta)) = (\id \tensor \tau)((\varphi \tensor \id)(\delta))$. Thus, $(\varphi \tensor \id)(e)$ is a projection iff $(\varphi \tensor \id)(\delta)$ is a projection. Combining these two equivalences, we obtain the claimed equivalence.
\end{proof}

\begin{remark}
Let $\varphi\: A \to B$ be a unital completely positive map. Then, the conclusion of Corollary~\ref{I} holds because $\varphi$ is completely contractive \cite{EffrosRuan}*{Corollary~5.1.2}.
\end{remark}

\begin{remark}
Let $A = C(\{1, \ldots, n\}) \iso \CC^n$ and $B = C(\{1, \ldots, m\}) \iso \CC^m$, and let $\varphi\: A \to B$ be a unital completely positive map. The states on $A$ and $B$ correspond to the probability distributions on $\{1, \ldots, n\}$ and $\{1, \ldots, m\}$, respectively. Precomposition by $\varphi$ maps states on $B$ to states on $A$, i.e., probability distributions on $\{1, \ldots, m\}$ to probability distributions on $\{1, \ldots, n\}$. Thus, $\varphi$ is essentially a channel $\{1, \ldots, m\} \to\{1, \ldots, n\}$ \cite{Shannon}.

Regarding $\varphi$ as a channel, we may formulate a number of conditions that express the determinism, i.e., the noiselessness, of this channel: First, the channel is deterministic iff $\varphi$ is a homomorphism. Second, the channel is deterministic iff its stochastic matrix $(\varphi \tensor \id)(\delta)$ is a projection. Third, the channel is deterministic iff the entropy of an input probability distribution is never smaller than the entropy of an output probability distribution.

Corollary~\ref{I} establishes the equivalence of the first and second of these conditions in the noncommutative setting. Theorem~\ref{O} establishes the equivalence of the first and third.
\end{remark}

\section{Schr\"{o}dinger picture}

\begin{definition}
Let $A$ and $B$ be finite-dimensional $C^*$-algebras, and let $\varphi \: A \to B$ be a completely positive map.
 We define $\varphi^\dagger\: B \to A$ and $\varphi^\ddagger\: B \to A$ by the following properties:
\begin{enumerate}
\item $\tr(\varphi(a)b) = \tr(a\varphi^\dagger(b))$,
\item $\trr(\varphi(a) b) = \trr(a \varphi^\ddagger(b))$.
\end{enumerate}
\end{definition}
The maps $\varphi^\dagger$ and $\varphi^\ddagger$ can be constructed as the adjoints of $\varphi$ for the inner products $\<a_1|a_2\> = \tr(a_1^*a_2)$ and $(a_1|a_2) = \trr(a_1^* a_2)$, respectively. It is well known that $\varphi^\dagger$ is completely positive. This fact can be viewed as a consequence of Arveson's extension theorem \cite{Arveson} via the Kraus decomposition of $\varphi$. Since the two adjoints are evidently related by $\varphi^\ddagger(b) = \varphi^\dagger(b\zeta_B)\zeta_A^{-1}$, we find that $\varphi^\ddagger$ is also completely positive. It is routine to show that $(\varphi \circ \psi)^\ddagger = \psi^\ddagger \circ \varphi^\ddagger$, that $(\varphi \oplus \psi)^\ddagger = \varphi^\ddagger \oplus \psi^\ddagger$, that $(\varphi \tensor \psi)^\ddagger = \varphi^\ddagger \tensor \psi^\ddagger$, that $\varphi^{\ddagger\ddagger}= \varphi$, and that $\id^\ddagger = \id$.

\begin{lemma}\label{J}
Let $A$ and $B$ be finite-dimensional $C^*$-algebras, and let $\delta_A \in A \tensor A^{op}$ and $\delta_B \in B \tensor B^{op}$ be their diagonal projections. Let $\varphi\: A \to B$ be a completely positive map. Then,
$
(\varphi^\ddagger \tensor \id)(\delta_B) = (\id \tensor \varphi)(\delta_A).
$
\end{lemma}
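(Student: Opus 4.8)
The plan is to prove the two named elements of $A \tensor B^{op}$ are equal by showing they have the same pairing against a spanning set. Since $\trr_{A \tensor B^{op}}$ is a faithful trace, the bilinear form $(X, Z) \mapsto \trr_{A \tensor B^{op}}(XZ)$ is nondegenerate (taking $Z = X^*$ forces $X = 0$), and the simple tensors $a \tensor b$ span $A \tensor B^{op}$. Hence it suffices to prove
$$
\trr_{A \tensor B^{op}}\big((\varphi^\ddagger \tensor \id)(\delta_B)\,(a \tensor b)\big) = \trr_{A \tensor B^{op}}\big((\id \tensor \varphi)(\delta_A)\,(a \tensor b)\big)
$$
for all $a \in A$ and $b \in B$. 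Note that in $\id \tensor \varphi$ the map $\varphi$ is read as a linear map $A^{op} \to B^{op}$ on the common underlying vector space, so that the right-hand side again lands in $A \tensor B^{op}$.

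Before computing I would record three elementary identifications. First, $\trr_{A \tensor B^{op}} = \trr_A \tensor \trr_{B^{op}}$, which follows from $\zeta_{A \tensor B^{op}} = \zeta_A \tensor \zeta_{B^{op}}$ together with the multiplicativity of $\tr$ on tensor products. Second, $\trr_{B^{op}} = \trr_B$ as linear functionals on the shared underlying space, since $\zeta_{B^{op}} = \zeta_B$ and the trace is insensitive to the order of multiplication. Third, the defining property $\trr_B(\varphi(x)y) = \trr_A(x\varphi^\ddagger(y))$ of $\varphi^\ddagger$. I would also extract from Lemma~\ref{G} the convenient reformulation: if $\delta_C = \sum_\gamma s_\gamma \tensor t_\gamma$ is any expansion, then for all $X, Y \in C$,
$$
\sum_\gamma \trr_C(s_\gamma X)\,\trr_C(Y t_\gamma) = \trr_{C \tensor C^{op}}\big(\delta_C\,(X \tensor Y)\big) = \trr_C(XY),
$$
which is just Lemma~\ref{G} written out after expanding $\delta_C$ and using that $(s_\gamma \tensor t_\gamma)(X \tensor Y) = s_\gamma X \tensor Y t_\gamma$ in $C \tensor C^{op}$.

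For the left-hand side, write $\delta_B = \sum_\alpha u_\alpha \tensor v_\alpha$, so that $(\varphi^\ddagger \tensor \id)(\delta_B) = \sum_\alpha \varphi^\ddagger(u_\alpha) \tensor v_\alpha$. Expanding the trace of the product and using the identifications above gives $\sum_\alpha \trr_A(\varphi^\ddagger(u_\alpha)a)\,\trr_B(b v_\alpha)$. Applying the defining property of $\varphi^\ddagger$ to the first factor, $\trr_A(\varphi^\ddagger(u_\alpha)a) = \trr_A(a\varphi^\ddagger(u_\alpha)) = \trr_B(\varphi(a)u_\alpha)$, turns this into $\sum_\alpha \trr_B(u_\alpha\varphi(a))\,\trr_B(b v_\alpha)$, which by the reformulation of Lemma~\ref{G} for $B$ (with $X = \varphi(a)$ and $Y = b$) collapses to $\trr_B(\varphi(a)b)$. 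For the right-hand side, write $\delta_A = \sum_\beta s_\beta \tensor t_\beta$, so that $(\id \tensor \varphi)(\delta_A) = \sum_\beta s_\beta \tensor \varphi(t_\beta)$, and the same bookkeeping gives $\sum_\beta \trr_A(s_\beta a)\,\trr_B(b\varphi(t_\beta))$. Rewriting $\trr_B(\varphi(t_\beta)b) = \trr_A(t_\beta\varphi^\ddagger(b))$ and applying the reformulation of Lemma~\ref{G} for $A$ (with $X = a$ and $Y = \varphi^\ddagger(b)$) collapses this to $\trr_A(a\varphi^\ddagger(b))$, which equals $\trr_B(\varphi(a)b)$ once more by the definition of $\varphi^\ddagger$. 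Both pairings thus equal $\trr_B(\varphi(a)b)$, so the two elements agree.

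The only real care needed—and the step I expect to be the main obstacle—is the bookkeeping around the opposite algebra: keeping straight which tensor factor carries the opposite multiplication, verifying $\trr_{B^{op}} = \trr_B$ and $\trr_{A \tensor B^{op}} = \trr_A \tensor \trr_B$, and remembering that $(s \tensor t)(a \tensor b) = sa \tensor bt$ in $A \tensor B^{op}$. Once these identifications are in place, the computation is a short symmetric pair of manipulations, each reducing to Lemma~\ref{G} and the definition of $\varphi^\ddagger$, and the symmetry of the final value $\trr_B(\varphi(a)b)$ under the two routes is exactly the content of the asserted identity.
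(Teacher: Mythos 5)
Your proof is correct and takes essentially the same route as the paper: pair both candidate elements against simple tensors $a \tensor b$ under $\trr$, show each pairing equals $\trr(\varphi(a)b) = \trr(a\varphi^\ddagger(b))$ using Lemma~\ref{G} and the defining property of $\varphi^\ddagger$, and conclude by nondegeneracy of the trace pairing on the spanning set of simple tensors. The only cosmetic difference is that the paper moves the maps across the pairing via the tensor-level adjoint identity $(\varphi \tensor \id)^\ddagger = \varphi^\ddagger \tensor \id$ together with traciality, whereas you expand $\delta_A$ and $\delta_B$ into sums of simple tensors and factor $\trr_{A \tensor B^{op}}$ as $\trr_A \tensor \trr_B$; both amount to the same computation.
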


\begin{proof}
Appealing to Lemma~\ref{G}, we calculate that for all $a \in A$ and $b \in B$,
\begin{align*}
\trr((\varphi^\ddagger \tensor \id)(\delta_B)(a \tensor b))
& =
\trr(\delta_B(\varphi(a) \tensor b))
=
\trr(\varphi(a) b)
=
\trr(a \varphi^\ddagger(b))
\\ & =
\trr(\delta_A(a \tensor \varphi^\ddagger(b)))
=
\trr((\id \tensor \varphi)(\delta_A)(a \tensor b)).
\end{align*}
Thus, $((\varphi^\ddagger \tensor \id)(\delta_B)| a \tensor b) = ((\id \tensor \varphi)(\delta_A)| a \tensor b)$ for all $a \in A$ and $b \in B$.
Since operators of the form $a \tensor b$ span $A \tensor B^{op}$, we conclude that $(\varphi^\ddagger \tensor \id)(\delta_B) = (\id \tensor \varphi)(\delta_A)$.
\end{proof}

A \emph{state} on $A$ is a positive map $\mu\: A \to \CC$ such that $\mu(1) = 1$. It is well known that any state is completely positive \cite{EffrosRuan}*{Lemma~5.1.4}. When $A \iso M_n(\CC)$, each state $\mu\: M_n(\CC) \to \CC$ is commonly associated with its density matrix $d = \mu^\dagger(1) \in M_n(\CC)$:
$$
\mu(a) = \tr( \mu(a)1) = \tr(a \mu^\dagger(1)).
$$
This defines a one-to-one correspondence between states $\mu\: M_n(\CC) \to \CC$ and density matrices $d \in M_n(\CC)$, which generalizes easily to arbitrary finite-dimensional $C^*$-algebras $A$.

\begin{definition}\label{K}
Let $A$ be a finite-dimensional $C^*$-algebra. In this context,
\begin{enumerate}
\item a \emph{density operator} in $A$ is a positive operator $d\in A$ such that $\tr(d) = 1$;
\item an \emph{adjusted density operator} in $A$ is a positive operator $\tilde d \in A$ such that $\trr(\tilde d) = 1$.
\end{enumerate}
\end{definition}

\begin{proposition}
Let $A$ be a finite-dimensional $C^*$-algebra.
\begin{enumerate}
\item States $\mu\: A \to \CC$ are in bijection with density operators $d \in A$:
$$
d = \mu^\dagger(1), \qquad \quad \mu(a) = \tr(ad).
$$
\item States $\mu\: A \to \CC$ are in bijection with adjusted density matrices $\tilde d \in A$:
$$
\tilde d = \mu^\ddagger(1), \qquad \quad \mu(a) = \trr(a \tilde d).
$$
\end{enumerate}
Combining these one-to-one correspondences, we have $d = \tilde d \zeta_A$.
\end{proposition}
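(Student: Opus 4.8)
The plan is to reduce everything to two inputs: the nondegeneracy of the trace pairings on $A$, and the complete positivity of the two adjoints, which is already established above. First I would record that the bilinear form $(a,b) \mapsto \tr(ab)$ is nondegenerate on the finite-dimensional space $A$, since $\<a|b\> = \tr(a^*b)$ is a faithful inner product; likewise $(a,b)\mapsto\trr(ab)$ is nondegenerate, because $\trr(a^*b) = \tr(a^*b\,\zeta_A) = (a|b)$ is an inner product, the operator $\zeta_A$ being positive, invertible, and central. Nondegeneracy guarantees that whenever a linear functional on $A$ is represented by an operator under one of these pairings, that operator is unique; this is what will supply both halves of each bijection.

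For part~(1), given a state $\mu$ I would set $d = \mu^\dagger(1)$ and unwind the defining property of $\mu^\dagger$. Since $\CC = M_1(\CC)$ and $\tr$ restricts to the identity on $\CC$, the identity $\tr(\mu(a)b) = \tr(a\,\mu^\dagger(b))$ with $b = 1$ reads $\mu(a) = \tr(ad)$ for all $a \in A$. Positivity of $d$ follows because $\mu$ is completely positive (states are completely positive), hence $\mu^\dagger$ is completely positive, hence $d = \mu^\dagger(1) \geq 0$; and $\tr(d) = \mu(1) = 1$, so $d$ is a density operator. Conversely, given a density operator $d$, the functional $\mu(a) = \tr(ad)$ is linear, satisfies $\mu(1) = \tr(d) = 1$, and is positive because $\tr(ad) = \tr(d^{1/2}ad^{1/2}) \geq 0$ whenever $a \geq 0$; thus $\mu$ is a state. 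That the two assignments are mutually inverse is precisely the uniqueness furnished by nondegeneracy of the trace pairing.

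Part~(2) is the verbatim analogue, with $\trr$, $\mu^\ddagger$, and adjusted density operators replacing $\tr$, $\mu^\dagger$, and density operators; the only new inputs are that $\mu^\ddagger$ is completely positive and that $\trr$ restricts to the identity on $\CC$, since $\zeta_\CC = 1$. Alternatively I could shortcut part~(2) by invoking the relation $\mu^\ddagger(b) = \mu^\dagger(b\,\zeta_\CC)\zeta_A^{-1} = \mu^\dagger(b)\zeta_A^{-1}$ recorded above, which gives $\tilde d = \mu^\ddagger(1) = d\,\zeta_A^{-1}$ immediately. This same relation yields the final claim upon rearranging. Failing that, I would obtain it directly by writing $\tr(ad) = \mu(a) = \trr(a\tilde d) = \tr(a\tilde d\,\zeta_A)$ for all $a \in A$ and invoking nondegeneracy of the trace pairing to conclude $d = \tilde d\,\zeta_A$.

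I do not anticipate a genuine obstacle; the content is bookkeeping around the two adjoints. The one point deserving care is the positivity of $d$ and $\tilde d$: rather than proving from scratch that a positive functional has a positive adjoint, I would simply apply the complete positivity of $\mu^\dagger$ and $\mu^\ddagger$ already available in the excerpt to the positive element $1$.
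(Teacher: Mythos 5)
Your proposal is correct, and it matches the paper's intent: the paper's own proof is simply the declaration ``This proof is entirely routine,'' and your argument is precisely that routine verification spelled out. The two key ingredients you isolate --- nondegeneracy of the pairings $\tr(ab)$ and $\trr(ab)$ for uniqueness, and complete positivity of $\mu^\dagger$ and $\mu^\ddagger$ applied to $1$ for positivity of $d$ and $\tilde d$ --- together with the shortcut $\mu^\ddagger(b) = \mu^\dagger(b\zeta_\CC)\zeta_A^{-1} = \mu^\dagger(b)\zeta_A^{-1}$ for the relation $d = \tilde d\,\zeta_A$, are exactly the facts the paper has already put in place for this purpose.
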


\begin{proof}
This proof is entirely routine.
\end{proof}

\begin{definition}
Let $A$ be a finite-dimensional $C^*$-algebra, and let $p \in A$ be a projection.
\begin{enumerate}
\item The \emph{uniform state} on $A$ is the state $ a \mapsto \trr(a)/\dim A$.
\item The \emph{uniform state} on $p$ is the state $a \mapsto \trr(ap)/\trr(p)$.
\end{enumerate}
\end{definition}

\begin{theorem}\label{L}
Let $A$ and $B$ be finite-dimensional $C^*$-algebras, and let $\delta_A \in A \tensor A^{op}$ and $\delta_B \in B \tensor B^{op}$ be their diagonal projections. Let $\mu\: B \tensor B^{op} \to \CC$ be the uniform state on $\delta_B$.
Let $d$ and $\tilde d$ be the density operator and the adjusted density operator of the state
$$\mu \circ (\varphi \tensor \id)\: A \tensor B^{op} \to \CC,$$
respectively. For each unital completely positive map $\varphi\: A \to B$, the following are equivalent:
\begin{enumerate}
\item $\varphi$ is a homomorphism,
\item $(\varphi \tensor \id)(\delta_A)$ is a projection,
\item $(\varphi^\ddagger \tensor \id)(\delta_B)$ is a projection,
\item $(\dim B) \tilde d$ is a projection,
\item $(\dim B) d (\zeta_A \tensor \zeta_B)^{-1}$ is a projection.
\end{enumerate}
\end{theorem}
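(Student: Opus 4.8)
The plan is to establish the cycle $(1)\Leftrightarrow(2)\Leftrightarrow(3)\Leftrightarrow(4)\Leftrightarrow(5)$, in which the only substantive step is $(2)\Leftrightarrow(3)$; the remaining three equivalences reduce to identifying the operators in play. For $(1)\Leftrightarrow(2)$ I would simply invoke Corollary~\ref{I}: since $\varphi$ is unital completely positive, it is completely contractive \cite{EffrosRuan}*{Corollary~5.1.2}, so $\varphi$ is a homomorphism iff $(\varphi \tensor \id)(\delta_A)$ is a projection.

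For $(3)\Leftrightarrow(4)$ and $(4)\Leftrightarrow(5)$ I expect the named operators to coincide, up to the scalars already inserted. Applying Lemma~\ref{G} with $a=b=1_B$ gives $\trr(\delta_B)=\trr(1_B)=\dim B$, so the uniform state $\mu$ on $\delta_B$ has adjusted density operator $\mu^\ddagger(1)=\delta_B/\dim B$. Using $(\mu\circ(\varphi\tensor\id))^\ddagger=(\varphi\tensor\id)^\ddagger\circ\mu^\ddagger$ together with $(\varphi\tensor\id)^\ddagger=\varphi^\ddagger\tensor\id$, I compute
$$
\tilde d=(\mu\circ(\varphi\tensor\id))^\ddagger(1)=(\varphi^\ddagger\tensor\id)(\mu^\ddagger(1))=\frac{1}{\dim B}(\varphi^\ddagger\tensor\id)(\delta_B),
$$
so $(\dim B)\tilde d=(\varphi^\ddagger\tensor\id)(\delta_B)$, giving $(3)\Leftrightarrow(4)$ at once. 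For $(4)\Leftrightarrow(5)$, the state $\mu\circ(\varphi\tensor\id)$ lives on $A\tensor B^{op}$, whose dimension operator is $\zeta_{A\tensor B^{op}}=\zeta_A\tensor\zeta_B$; hence $d=\tilde d(\zeta_A\tensor\zeta_B)$, so $(\dim B)\tilde d=(\dim B)d(\zeta_A\tensor\zeta_B)^{-1}$ and conditions $(4)$ and $(5)$ name the same operator.

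The main obstacle is $(2)\Leftrightarrow(3)$, which relates $(\varphi\tensor\id)(\delta_A)\in B\tensor A^{op}$ to $(\varphi^\ddagger\tensor\id)(\delta_B)\in A\tensor B^{op}$. I would first record that $\delta_A$ is invariant under the coordinate flip $\Sigma$, which is immediate from the explicit formula $\delta_A=\sum_{k,i,j}n_k^{-1}\,e_{ijk}\tensor e_{jik}$ derived in the proof of Lemma~\ref{H}. Applying $\varphi\tensor\id$ to the identity $\delta_A=\Sigma(\delta_A)$ and using $(\varphi\tensor\id)\circ\Sigma=\Sigma\circ(\id\tensor\varphi)$, I get $(\varphi\tensor\id)(\delta_A)=\Sigma((\id\tensor\varphi)(\delta_A))$, which by Lemma~\ref{J} equals $\Sigma((\varphi^\ddagger\tensor\id)(\delta_B))$. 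Since the flip is a $*$-isomorphism $A\tensor B^{op}\to B^{op}\tensor A$, the operator $(\varphi^\ddagger\tensor\id)(\delta_B)$ is a projection iff its flip $(\varphi\tensor\id)(\delta_A)$ is a projection in $B^{op}\tensor A$.

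The remaining care is bookkeeping of opposite algebras: $B^{op}\tensor A=(B\tensor A^{op})^{op}$ as $C^*$-algebras, and a $C^*$-algebra and its opposite share the same involution and the same squares (because $x\cdot_{op}x=x\cdot x$), hence the same projections. Therefore $(\varphi\tensor\id)(\delta_A)$ is a projection in $B^{op}\tensor A$ iff it is a projection in $B\tensor A^{op}$, which is precisely condition $(2)$. I anticipate no analytic difficulty here; the only subtlety is keeping the flip and the opposite-algebra identifications straight, and once that is done the five conditions link up exactly as claimed.
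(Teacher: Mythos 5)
Your proposal is correct and follows essentially the same route as the paper's proof: Corollary~\ref{I} for $(1)\Leftrightarrow(2)$, Lemma~\ref{J} combined with the tensor flip for $(2)\Leftrightarrow(3)$, and Lemma~\ref{G} together with $\zeta_{A\tensor B^{op}}=\zeta_A\tensor\zeta_B$ for $(3)\Leftrightarrow(4)\Leftrightarrow(5)$. The only cosmetic difference is that you justify the flip-invariance of $\delta_A$ via the explicit formula from the proof of Lemma~\ref{H}, whereas the paper obtains $\sigma(\delta_A)=\delta_{A^{op}}=\delta_A$ (as vectors) directly from Definition~\ref{E}; both are sound, and your handling of the opposite-algebra bookkeeping matches the paper's.
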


\begin{proof}
The equivalence $(1) \Leftrightarrow (2)$ is Corollary~\ref{I}. For the equivalence $(2) \Leftrightarrow (3)$, we appeal to Lemma~\ref{J}, calculating that
\begin{align*}&
\sigma((\varphi^\ddagger \tensor \id)(\delta_B)) = \sigma((\id \tensor \varphi)(\delta_A)) = (\varphi \tensor \id)(\sigma(\delta_A)) = (\varphi \tensor \id)(\delta_{A^{op}}),
\end{align*}
where $\sigma$ denotes the appropriate tensor-symmetry $*$-isomorphism throughout. It follows that $(\varphi^\ddagger \tensor \id)(\delta_B)$ is a projection in $A \tensor B^{op}$ iff $(\varphi \tensor \id)(\delta_{A^{op}})$ is a projection in $B^{op} \tensor A$. As an immediate consequence of Definition~\ref{E}, we have that $\delta_{A^{op}}$ and $\delta_{A}$ are equal as vectors, and hence, $(\varphi \tensor \id)(\delta_{A^{op}})$ and $(\varphi \tensor \id)(\delta_{A})$ are equal as vectors. Thus, the former is a projection in $B^{op} \tensor A$ iff the latter is a projection $B \tensor A^{op} = (B^{op} \tensor A)^{op}$. Overall, we have proved the equivalence $(2) \Leftrightarrow (3)$.

For the equivalence $(3) \Leftrightarrow (4)$, we appeal to Lemma \ref{G}, calculating that
\begin{align*}
\tilde d & = ( \mu \circ (\phi \tensor \id))^\ddagger (1) = (\phi \tensor \id)^\ddagger(\mu^\ddagger(1)) = (\phi^\ddagger \tensor \id)(\mu^\ddagger(1))
= 
(\phi^\ddagger \tensor \id)(\trr(\delta_B)^{-1} \delta_B)
\\ & =
\trr(\delta_B)^{-1} (\phi^\ddagger \tensor \id)(\delta_B)
=
\trr(\delta_B(1 \tensor 1))^{-1} (\phi^\ddagger \tensor \id)(\delta_B)
=
\trr(1)^{-1} (\phi^\ddagger \tensor \id)(\delta_B)
\\ & = 
(\dim B)^{-1} (\phi^\ddagger \tensor \id)(\delta_B).
\end{align*}
For the equivalence $(4) \Leftrightarrow (5)$, we simply observe that $\zeta_{A \tensor B^{op}} = \zeta_A \tensor \zeta_{B^{op}} = \zeta_A \tensor \zeta_B$.
\end{proof}

\begin{remark}
The mapping $\mu \mapsto \mu \circ (\varphi \tensor \id)$ is essentially the Choi-Jamio{\l}kowski isomorphism, which is between unital completely positive maps $A \to B$ and certain states on $A \tensor B^{op}$ or, dually, between trace-preserving completely positive maps $B \to A$ and certain density operators in $A \tensor B^{op}$. Of course, Theorem~\ref{L} itself suggests that it is more natural to use the adjusted trace for this duality. Thus, the Choi-Jamio{\l}kowski isomorphism is equivalently between completely positive maps $\varphi\:B \to A$ that preserve the adjusted trace and certain adjusted density operators $\tilde d \in A \tensor B^{op}$.
\end{remark}

\section{Adjusted entropy}

\begin{definition}\label{M}
Let $A$ be a finite-dimensional $C^*$-algebra. Let $\mu\: A \to \CC$ be a state, and let $d = \mu^\dagger(1)$ and $\tilde d = \mu^\ddagger(1)$ be its density operator and its adjusted density operator, respectively. We define
\begin{enumerate}
\item $\displaystyle S(\mu) = -\tr(d \log d)$;
\item $\displaystyle\tilde S (\mu) = -\trr(\tilde d \log \tilde d)$.
\end{enumerate}
We say that $S(\mu)$ is the \emph{entropy} of $\mu$ and that $\tilde S(\mu)$ is the \emph{adjusted entropy} of $\mu$.
\end{definition}

If $A = M_n(\CC)$, then $S(\mu)$ is the von Neumann entropy of $\mu$, and the adjusted entropy of $\mu$ is larger:
\begin{align*}
\tilde S (\mu) & = - \trr (\tilde d \log \tilde d) = - \mu(\log \tilde d) = - \mu(\log(n^{-1}d)) = -\mu(\log d) + \log n \\ & = - \tr(d \log d) + \log n = S(\mu) + \log n.
\end{align*}
This suggests that a quantum system has intrinsic entropy that depends on its dimension.

Definition~\ref{M} requires a standard gloss because $\log d$ is undefined when $d$ is not invertible. We regard $d \log d$ as a notation for $f(d)$, where $f\: [0,\infty) \to \RR$ is the continuous function defined by $f(t) = t \log t$ for $t \in (0,1]$ and $f(0) = 0$.
Some computations will appear to be unnecessarily complicated because they treat this point with some care.

\begin{lemma}
Let $A$ and $B$ be finite-dimensional $C^*$-algebras, and let $\delta_A \in A \tensor A^{op}$ and $\delta_B \in B \tensor B^{op}$ be their diagonal projections. Let $\varphi\: A \to B$ be a unital $*$-homomorphism. Then, $(\varphi \tensor \varphi)(\delta_A) \geq \delta_B$.
\end{lemma}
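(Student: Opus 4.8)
The plan is to exhibit $(\varphi \tensor \varphi)(\delta_A)$ as an orthogonal projection on the Hilbert space $B$ and to show that its range contains the center of $B$, which is the range of $\delta_B$ by Proposition~\ref{F}.

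First I would observe that since $\varphi$ is a unital $*$-homomorphism $A \to B$, it is simultaneously a unital $*$-homomorphism $A^{op} \to B^{op}$, so that $\varphi \tensor \varphi\: A \tensor A^{op} \to B \tensor B^{op}$ is a unital $*$-homomorphism. Consequently $P := (\varphi \tensor \varphi)(\delta_A)$ is a projection in $B \tensor B^{op}$, because $\delta_A$ is. Representing $B \tensor B^{op}$ on the Hilbert space $B$ equipped with $(a|b) = \trr(a^* b)$, this $*$-representation carries $P$ to an orthogonal projection. By Proposition~\ref{F} the range of $\delta_B$ is the center $Z(B)$. For orthogonal projections, $P \geq \delta_B$ is equivalent to $\mathrm{range}(\delta_B) \subseteq \mathrm{range}(P)$, which, since $P$ is a projection, is in turn equivalent to $P(z) = z$ for every $z \in Z(B)$. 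This reduction is the conceptual core; the remainder is a direct computation.

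Next I would compute $P(z)$ for central $z$. Without loss of generality $A = M_{n_1}(\CC) \oplus \cdots \oplus M_{n_\ell}(\CC)$, so Lemma~\ref{H} gives $\delta_A = \sum_k \sum_{ij} n_k^{-1}\, e_{ijk} \tensor e_{jik}$. Under the representation on $B$ an element $b_1 \tensor b_2 \in B \tensor B^{op}$ acts by $b \mapsto b_1 b b_2$, so writing $f_{ijk} = \varphi(e_{ijk})$ I obtain
$$P(z) = \sum_{k=1}^\ell \frac{1}{n_k} \sum_{i,j=1}^{n_k} f_{ijk}\, z\, f_{jik}.$$
Because $\varphi$ is a $*$-homomorphism, the $f_{ijk}$ satisfy the matrix-unit relations $f_{ijk} f_{j'i'k'} = \delta_{jj'}\delta_{kk'} f_{ii'k}$, in particular $f_{ijk} f_{jik} = f_{iik}$, and because $z$ is central it commutes with each $f_{jik}$. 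Hence $f_{ijk}\, z\, f_{jik} = f_{ijk} f_{jik}\, z = f_{iik}\, z$; the sum over $j$ contributes a factor $n_k$, and $\sum_i f_{iik} = \varphi(p_k)$ for the central projection $p_k = \sum_i e_{iik}$. The sum therefore collapses to $\sum_k \varphi(p_k)\, z = \varphi(1_A)\, z = z$ by unitality, so $P(z) = z$ for all $z \in Z(B)$, which completes the argument.

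The only real obstacle is the bookkeeping: keeping straight the opposite-algebra factor, so that $1 \tensor e_{jik}$ acts by right multiplication, and confirming that the representation of $B \tensor B^{op}$ is a $*$-representation for the adjusted-trace inner product, so that $P$ is genuinely an orthogonal projection and the range comparison with $\delta_B$ is legitimate. Once these points are settled the matrix-unit computation is routine, and it is precisely the unitality of $\varphi$ that forces the range of $P$ to contain all of $Z(B)$ rather than merely $\varphi(Z(A))$.
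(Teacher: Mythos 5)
Your proof is correct, but it takes a genuinely different route from the paper's. The paper argues order-theoretically, directly from Definition~\ref{E}: writing $\delta_A = 1 - \bigvee_p p \tensor (1-p)$, it notes that $\varphi(p)$ is again a projection, so $\delta_B$ annihilates each $(\varphi \tensor \varphi)(p \tensor (1-p)) = \varphi(p) \tensor (1 - \varphi(p))$ by the defining property of $\delta_B$; since the unital $*$-homomorphism $\varphi \tensor \varphi$ preserves joins of projections, $\delta_B$ annihilates $(\varphi \tensor \varphi)(1 - \delta_A)$, which is exactly $\delta_B \leq (\varphi \tensor \varphi)(\delta_A)$. You instead pass to the faithful representation of $B \tensor B^{op}$ on the Hilbert space $B$, identify the range of $\delta_B$ with the center $Z(B)$ via Proposition~\ref{F}, and verify by the matrix-unit formula of Lemma~\ref{H} that $(\varphi \tensor \varphi)(\delta_A)$ fixes $Z(B)$ pointwise, the relations $f_{ijk} z f_{jik} = f_{iik} z$ and unitality collapsing the sum to $z$. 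The paper's argument buys freedom from any choice of basis or structure theorem for $A$, and it never needs the explicit form of $\delta$; its one glossed step is that $\varphi \tensor \varphi$ commutes with the (a priori infinite) join $\bigvee_p$, which requires a finite-dimensionality argument. Your argument replaces that step with a concrete finite computation, at the cost of invoking the structure theorem for $A$, the explicit expansion of $\delta_A$, and the representation-theoretic facts that the left-right action is a faithful $*$-representation for the $\trr$ inner product (so that the order $P \geq \delta_B$ can be tested as operators, and range containment of projections is the right criterion). Both proofs use the same essential input -- that a unital $*$-homomorphism turns the defining algebraic structure of $\delta_A$ into the corresponding structure over $B$ -- so the difference is one of packaging: intrinsic lattice argument versus explicit computation in a representation.
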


\begin{proof}
The projection $\delta_A$ is equivalently the largest projection in $A \tensor A^{op}$ that is orthogonal to $\bigvee_p p \tensor (1-p)$, where $p$ ranges over the projections in $A$. In other words, $\delta_A = 1 - \bigvee_p p \tensor (1-p)$. We apply this equation to reason as follows:
\begin{align*}&
\delta_B (q \tensor (1-q)) = 0 \; \text{ for all projections }q \in B
\\ &\IM
\delta_B((\varphi \tensor \varphi)(p \tensor (1-p))) = \delta_B(\varphi(p) \tensor (1-\varphi(p))) = 0 \; \text{ for all projections } p \in A
\\ & \IM
\delta_B\left((\varphi \tensor \varphi)\left(\bigvee_p p \tensor (1-p)\right)\right) = \delta_B\left(\bigvee_p(\varphi \tensor \varphi)( p \tensor (1-p))\right) = 0
\\ & \IM
\delta_B(1 - (\varphi \tensor \varphi)(\delta_A)) = \delta_B (\varphi \tensor \varphi)(1 - \delta_A) = 0
\IM \delta_B \leq (\varphi \tensor \varphi)(\delta_A).
\end{align*}
\end{proof}

\begin{proposition}\label{N}
Let $A$ and $B$ be finite-dimensional $C^*$-algebras, and let $\varphi\: A \to B$ be a unital $*$-homomorphism. Then, $\varphi(\varphi^\ddagger(b)) \geq b$ for all positive $b \in B$.
\end{proposition}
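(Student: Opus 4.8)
The plan is to combine the preceding lemma, $(\varphi \tensor \varphi)(\delta_A) \geq \delta_B$, with the duality of Lemma~\ref{J} to obtain a single operator inequality in $B \tensor B^{op}$ that already encodes the desired pointwise inequality, and then to extract the pointwise statement using Lemma~\ref{G} together with the fact that positivity in a finite-dimensional $C^*$-algebra is detected by the faithful trace $\trr$.

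First I would set $E = \varphi \circ \varphi^\ddagger \: B \to B$ and compute $(E \tensor \id)(\delta_B)$. Since $(\varphi \tensor \id) \circ (\varphi^\ddagger \tensor \id) = E \tensor \id$ and, by Lemma~\ref{J}, $(\varphi^\ddagger \tensor \id)(\delta_B) = (\id \tensor \varphi)(\delta_A)$, I obtain
\[
(E \tensor \id)(\delta_B) = (\varphi \tensor \id)\big((\id \tensor \varphi)(\delta_A)\big) = (\varphi \tensor \varphi)(\delta_A).
\]
The preceding lemma then gives $(E \tensor \id)(\delta_B) = (\varphi \tensor \varphi)(\delta_A) \geq \delta_B$ as operators in $B \tensor B^{op}$. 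All the tensor factors and identity maps match up: $\varphi^\ddagger \tensor \id \: B \tensor B^{op} \to A \tensor B^{op}$, $\varphi \tensor \id \: A \tensor B^{op} \to B \tensor B^{op}$, and $\varphi \tensor \varphi \: A \tensor A^{op} \to B \tensor B^{op}$, so both sides land in $B \tensor B^{op}$.

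Next I would pair this inequality against positive elements. Fix positive $b, c \in B$, so that $b \tensor c$ is positive in $B \tensor B^{op}$. Since $\trr$ is a positive trace, $P \geq Q$ implies $\trr(Pz) \geq \trr(Qz)$ for positive $z$, and hence $\trr\big((E \tensor \id)(\delta_B)\,(b \tensor c)\big) \geq \trr(\delta_B\,(b \tensor c))$. On the left I would move $E \tensor \id$ across using the defining property of $(\,\cdot\,)^\ddagger$, noting that $E$ is self-adjoint for $\trr$ because $E^\ddagger = (\varphi \varphi^\ddagger)^\ddagger = \varphi^{\ddagger\ddagger}\varphi^\ddagger = E$; this rewrites the left side as $\trr\big(\delta_B\,(E(b) \tensor c)\big)$. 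Two applications of Lemma~\ref{G} then reduce both sides, yielding $\trr(E(b)\,c) \geq \trr(b\,c)$, that is,
\[
\trr\big((\varphi(\varphi^\ddagger(b)) - b)\,c\big) \geq 0.
\]
Since this holds for every positive $c \in B$ and $\trr$ is faithful, I conclude $\varphi(\varphi^\ddagger(b)) - b \geq 0$, as desired.

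The step I expect to be the main obstacle is the last one: translating the operator inequality living in $B \tensor B^{op}$ into the pointwise $C^*$-order inequality in $B$. The clean route is precisely the trace pairing above—Lemma~\ref{G} is what makes $\trr(\delta_B(u \tensor v)) = \trr(uv)$ available, so that the diagonal projection $\delta_B$ collapses the tensor pairing back to a pairing in $B$ alone, and faithfulness of $\trr$ then converts the family of scalar inequalities $\trr((\,\cdot\,)\,c) \geq 0$, ranging over positive $c$, into genuine positivity of the element.
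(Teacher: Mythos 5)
Your proof is correct and rests on exactly the same ingredients as the paper's: the inequality $(\varphi \tensor \varphi)(\delta_A) \geq \delta_B$ from the preceding lemma, Lemma~\ref{G} to collapse pairings against the diagonal projection, the $\ddagger$-adjoint calculus, and detection of positivity by trace pairings (your pairing against arbitrary positive $c$ with $\trr$ faithful is the same device as the paper's pairing against an arbitrary state's adjusted density operator). The only cosmetic difference is the order of operations: you first form the global operator identity $(E \tensor \id)(\delta_B) = (\varphi \tensor \varphi)(\delta_A)$ via Lemma~\ref{J} and then test it, whereas the paper never forms that identity explicitly and instead carries out the same manipulations entirely inside the trace, so the two arguments are essentially identical.
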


\begin{proof}
Let $b \in B$ be a positive operator. Let $\mu\: B \to \CC$ be a state, and let $\tilde d \in B$ be its adjusted density operator. We apply Lemma~\ref{G} to compute that
\begin{align*}
\mu(\varphi(\varphi^\ddagger(b)))
& =
\trr(\tilde d \varphi(\varphi^\ddagger(b)))
=
\trr(\varphi^\ddagger(\tilde d)\varphi^\ddagger(b))
=
\trr(\delta_A(\varphi^\ddagger(\tilde d) \tensor \varphi^\ddagger(b)))
\\ & =
\trr((\varphi \tensor \varphi)(\delta_A)(\tilde d \tensor b))
\geq
\trr(\delta_B(\tilde d \tensor b))
=
\trr(\tilde d b)
=
\mu(b).
\end{align*}
We find that $\varphi(\varphi^\ddagger(b))$ is a positive operator in $B$ such that $\mu(\varphi(\varphi^\ddagger(b))) \geq \mu(b)$ for all states $\mu\: B \to \CC$. Therefore, $\varphi(\varphi^\ddagger(b)) \geq b$.
\end{proof}

\begin{theorem}\label{O}
Let $A$ and $B$ be finite-dimensional $C^*$-algebras, and let $\varphi\:A \to B$ be a unital completely positive map. The following are equivalent:
\begin{enumerate}
\item $\varphi$ is a homomorphism,
\item all finite-dimensional $C^*$-algebras $C$ and all states $\mu\: B \tensor C \to \CC$ satisfy
$$
\tilde S(\mu \circ (\varphi \tensor \id)) \leq \tilde S(\mu),
$$
\item the uniform state $\mu\: B \tensor B^{op} \to \CC$ on the diagonal projection $\delta_B \in B \tensor B^{op}$ satisfies $$\tilde S(\mu \circ (\varphi \tensor \id)) = \log(\dim B).$$
\end{enumerate}
\end{theorem}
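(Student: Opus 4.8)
The plan is to prove the cycle $(1)\Rightarrow(2)\Rightarrow(3)\Rightarrow(1)$, with the first implication carrying essentially all of the content. For $(1)\Rightarrow(2)$, I would first observe that it suffices to treat the untensored statement: if $\varphi$ is a homomorphism then so is $\varphi\tensor\id\colon A\tensor C\to B\tensor C$, so the general claim ``$\tilde S(\nu\circ\psi)\le\tilde S(\nu)$ for every unital homomorphism $\psi$ and every state $\nu$'' applied to $\psi=\varphi\tensor\id$ and $\nu=\mu$ yields (2) at once. Thus I reduce to proving that for a unital homomorphism $\psi\colon A\to B$ and a state $\mu$ on $B$ with adjusted density operator $\tilde d=\mu^\ddagger(1)$, one has $\trr(\psi^\ddagger(\tilde d)\log\psi^\ddagger(\tilde d))\ge\trr(\tilde d\log\tilde d)$, which by Definition~\ref{M} is exactly $\tilde S(\mu\circ\psi)\le\tilde S(\mu)$.

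The core computation would proceed in three moves. Writing $L$ for the operator agreeing with $\log\psi^\ddagger(\tilde d)$ on the range of $\psi^\ddagger(\tilde d)$ and vanishing on its kernel, so that $\psi^\ddagger(\tilde d)\,L=\psi^\ddagger(\tilde d)\log\psi^\ddagger(\tilde d)$, the defining property of $\ddagger$ gives $\trr(\psi^\ddagger(\tilde d)\log\psi^\ddagger(\tilde d))=\trr(L\,\psi^\ddagger(\tilde d))=\trr(\psi(L)\,\tilde d)$. Next, because $\psi$ is a homomorphism it carries the spectral projections of $\psi^\ddagger(\tilde d)$ to orthogonal projections summing to $\psi(1)=1$, so $\psi(L)=\log\psi(\psi^\ddagger(\tilde d))$ in the same range-supported sense. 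Finally, Proposition~\ref{N} gives $\psi(\psi^\ddagger(\tilde d))\ge\tilde d$, and the operator monotonicity of $\log$ then yields $\log\psi(\psi^\ddagger(\tilde d))\ge\log\tilde d$, whence $\trr\big(\tilde d\,(\psi(L)-\log\tilde d)\big)\ge0$ and the claimed inequality follows. The main obstacle is that $\log$ is operator monotone only on positive definite operators, while $\tilde d$ and $\psi(\psi^\ddagger(\tilde d))$ are typically singular; I expect to resolve this by perturbing to $\tilde d+\varepsilon$ and $\psi(\psi^\ddagger(\tilde d))+\varepsilon$, applying monotonicity there, and letting $\varepsilon\to0$, using that $\tilde d$ annihilates the kernel of $\psi(\psi^\ddagger(\tilde d))$ so that the divergent terms drop out. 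This is precisely the care with $\log$ of a noninvertible operator flagged after Definition~\ref{M}.

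For $(2)\Rightarrow(3)$ and $(3)\Rightarrow(1)$ I would exploit the single observation that the relevant Choi-type operator is dominated by $1$. Let $\mu$ be the uniform state on $\delta_B$; its adjusted density operator is $(\dim B)^{-1}\delta_B$, and since $\delta_B$ is a projection with $\trr(\delta_B)=\dim B$ one computes $\tilde S(\mu)=\log(\dim B)$. By the calculation in the proof of Theorem~\ref{L}, the adjusted density operator of $\mu\circ(\varphi\tensor\id)$ is $(\dim B)^{-1}P$ with $P=(\varphi^\ddagger\tensor\id)(\delta_B)$, and Lemma~\ref{J} rewrites this as $P=(\id\tensor\varphi)(\delta_A)$. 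Since $\id\tensor\varphi$ is unital and positive and $\delta_A\le1$, we get $0\le P\le1$, so $P\log P\le0$ because $t\log t\le0$ on $[0,1]$; a short calculation using $\trr(P)=\dim B$ then gives $\tilde S(\mu\circ(\varphi\tensor\id))=\log(\dim B)-(\dim B)^{-1}\trr(P\log P)\ge\log(\dim B)$. Hence the inequality in (2), specialized to $C=B^{op}$ and this $\mu$, forces equality, which is (3). Conversely, equality in (3) forces $\trr(P\log P)=0$; since $P\log P\le0$ and $\trr$ is faithful, this yields $P\log P=0$, so the spectrum of $P$ lies in $\{0,1\}$ and $P$ is a projection, and Theorem~\ref{L} identifies this with $\varphi$ being a homomorphism. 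This closes the cycle.
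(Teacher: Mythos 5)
Your proposal is correct and takes essentially the same route as the paper's proof: $(1)\Rightarrow(2)$ via the $\ddagger$-adjoint identity, commutation of unital homomorphisms with functional calculus, Proposition~\ref{N}, and operator monotonicity of $\log$ under $\epsilon$-regularization; then $(2)\Rightarrow(3)\Rightarrow(1)$ via the uniform state on $\delta_B$, the bound $0 \leq (\varphi^\ddagger \tensor \id)(\delta_B) \leq 1$ coming from Lemma~\ref{J}, faithfulness of $\trr$, and Theorem~\ref{L}. Your explicit range-supported logarithm $L$ and the reduction to the untensored case are only presentational variants of the paper's computation, which runs the same argument directly on $\varphi \tensor \id$ inside a single $\epsilon$-limit.
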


\begin{proof}
We prove the implication $(1) \Rightarrow (2)$. Assume that $\varphi$ is a homomorphism. Let $C$ be a finite-dimensional $C^*$-algebra, and let $\mu\: B \tensor C \to \CC$ be a state. The adjusted density operator of $\mu \circ (\varphi \tensor \id)$ is $(\mu \circ (\varphi \tensor \id))^\ddagger(1) = (\varphi \tensor \id)^\ddagger(\mu^\ddagger(1))$. We calculate that 
\begin{align*}
\tilde S(\mu \circ (\varphi \tensor \id))
& =
- \trr((\varphi \tensor \id)^\ddagger(\mu^\ddagger(1)) \log (\varphi \tensor \id)^\ddagger(\mu^\ddagger(1)))
\\ & =
- \lim_{\epsilon \to 0^+} \trr((\varphi \tensor \id)^\ddagger(\mu^\ddagger(1)) \log ((\varphi \tensor \id)^\ddagger(\mu^\ddagger(1)) + \epsilon 1))
\\ & =
- \lim_{\epsilon \to 0^+} \trr(\mu^\ddagger(1) (\varphi \tensor \id)(\log ((\varphi \tensor \id)^\ddagger(\mu^\ddagger(1)) + \epsilon 1)))
\\ & =
- \lim_{\epsilon \to 0^+} \trr(\mu^\ddagger(1) \log ((\varphi \tensor \id)((\varphi \tensor \id)^\ddagger(\mu^\ddagger(1))) + \epsilon 1))
\\ & \leq
- \lim_{\epsilon \to 0^+} \trr(\mu^\ddagger(1) \log (\mu^\ddagger(1) + \epsilon 1))
= 
-  \trr(\mu^\ddagger(1) \log \mu^\ddagger(1)) = \tilde S(\mu).
\end{align*}
At the inequality step, we reason that $(\varphi \tensor \id)((\varphi \tensor \id)^\ddagger(\mu^\ddagger(1))) \geq \mu^\ddagger(1)$ by Proposition~\ref{N} and, hence, that $\log ((\varphi \tensor \id)((\varphi \tensor \id)^\ddagger(\mu^\ddagger(1))) + \epsilon 1) \geq \log (\mu^\ddagger(1) + \epsilon 1)$ because the logarithm is operator monotone.

We prove the implication $(2) \Rightarrow (3)$. Assume that $\tilde S(\mu \circ (\varphi \tensor \id)) \leq \tilde S(\mu)$ for all finite-dimensional $C^*$-algebras $C$ and all states $\mu\: B \tensor C \to \CC$. Let $C = B^{op}$. Let $\mu\: B \tensor B^{op} \to \CC$ be the uniform state on $\delta_B$. Its adjusted density matrix is $\mu^\ddagger(1) = (\dim B)^{-1} \delta_B$ because $\trr(\delta_B) = \dim B$ by Lemma~\ref{G}. We compute $\tilde S(\mu)$ and $\tilde S (\mu \circ (\varphi \tensor \id))$:

\begin{align*}
\tilde S(\mu) & = - \trr(\mu^\ddagger(1) \log \mu^\ddagger(1)) = - \trr((\dim B)^{-1}\delta_B \log ((\dim B)^{-1} \delta_B))
\\ & =
- \trr((\dim B)^{-1}\delta_B \log \delta_B ) + \trr((\dim B)^{-1}\delta_B \log (\dim B) 1)
\\ & = - \trr((\dim B)^{-1}0) + \trr(\mu^\ddagger(1) \log (\dim B)1)
=
0 + \mu(\log(\dim B)1) = \log(\dim B);
\end{align*}

\begin{align*}
\tilde S (\mu \circ (\varphi \tensor \id))
& =
- \trr((\varphi^\ddagger \tensor \id)(\mu^\ddagger(1)) \log (\varphi^\ddagger \tensor \id)(\mu^\ddagger(1)))
\\ & =
- \trr((\varphi^\ddagger \tensor \id)((\dim B)^{-1} \delta_B) \log (\varphi^\ddagger \tensor \id)((\dim B)^{-1} \delta_B))
\\ & = 
- \trr((\dim B)^{-1}(\varphi^\ddagger \tensor \id)(\delta_B) \log (\varphi^\ddagger \tensor \id)(\delta_B))
\\ & \qquad +
\trr((\varphi^\ddagger \tensor \id)((\dim B)^{-1}\delta_B) \log (\dim B)1)
\\ & =
- (\dim B)^{-1}\trr((\varphi^\ddagger \tensor \id)(\delta_B) \log (\varphi^\ddagger \tensor \id)(\delta_B))
\\ & \qquad +
\log (\dim B) \trr((\varphi^\ddagger \tensor \id)(\mu^\ddagger(1)))
\\ & =
- (\dim B)^{-1}\trr((\varphi^\ddagger \tensor \id)(\delta_B) \log (\varphi^\ddagger \tensor \id)(\delta_B)) + \log(\dim B).
\end{align*}

\quad

\noindent Thus, the inequality $\tilde S(\mu \circ (\varphi \tensor \id)) \leq \tilde S(\mu)$ implies that $\trr((\varphi^\ddagger \tensor \id)(\delta_B) \log (\varphi^\ddagger \tensor \id)(\delta_B)) \geq 0$. However, $(\varphi^\ddagger \tensor \id)(\delta_B) \log (\varphi^\ddagger \tensor \id)(\delta_B)$ is a negative operator because $\|(\varphi^\ddagger \tensor \id)(\delta_B)\| = \|(\id \tensor \varphi)(\delta_A)\| \leq 1$ by Lemma~\ref{J}. Therefore, $\trr((\varphi^\ddagger \tensor \id)(\delta_B) \log (\varphi^\ddagger \tensor \id)(\delta_B)) = 0$, and hence, $\tilde S(\mu \circ (\varphi \tensor \id)) = \log(\dim B)$.

We prove the implication $(3) \Rightarrow (1)$. Let $\mu$ be the uniform state on $\delta_B$, and assume that $\tilde S(\mu \circ (\varphi \tensor \id)) = \log(\dim B)$. We calculate, as we did before, that
$$
\tilde S(\mu \circ (\varphi \tensor \id)) = - (\dim B)^{-1}\trr((\varphi^\ddagger \tensor \id)(\delta_B) \log (\varphi^\ddagger \tensor \id)(\delta_B)) + \log(\dim B),
$$
and hence, $\trr((\varphi^\ddagger \tensor \id)(\delta_B) \log (\varphi^\ddagger \tensor \id)(\delta_B)) = 0$. We infer that $(\varphi^\ddagger \tensor \id)(\delta_B) \log (\varphi^\ddagger \tensor \id)(\delta_B)$ is equal to zero, because it is a negative operator and $\trr\: A \tensor B^{op} \to \CC$ is a faithful trace. Thus, $(\varphi^\ddagger \tensor \id)(\delta_B)$ has spectrum in $\{0,1\}$; therefore, it is a projection. We conclude that $\varphi$ is a homomorphism by Theorem~\ref{L}.
\end{proof}

\begin{proposition}\label{P}
Let $A = M_{n_1}(\CC) \oplus \cdots \oplus M_{n_\ell}(\CC)$. Let $\mu\: A \to \CC$ be a state, and let $d = d_{1} \oplus \cdots \oplus d_{\ell}$ be its density matrix. Then,
$$
S(\mu) = - \sum_{k = 1}^\ell \tr(d_k \log d_k), \qquad  \tilde S(\mu) = - \sum_{k = 1}^\ell \tr(d_k \log d_k) + \sum_{k=1}^\ell \tr(d_k)\log(n_k),
$$
$$
\tilde S(\mu) = S(\mu) + \mu(\log \zeta_A).
$$
\end{proposition}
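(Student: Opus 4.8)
The plan is to verify the three identities by direct computation, exploiting the block-diagonal structure of every operator involved and taking care with the gloss on $\log$ for non-invertible density operators. Throughout, I write $f\:[0,\infty)\to\RR$ for the continuous function with $f(t)=t\log t$ on $(0,\infty)$ and $f(0)=0$, so that $d\log d$ denotes $f(d)$ and $\tilde d\log\tilde d$ denotes $f(\tilde d)$. For the first formula I would observe that, since $d = d_1\oplus\cdots\oplus d_\ell$ is block diagonal, the continuous functional calculus acts blockwise, giving $f(d) = f(d_1)\oplus\cdots\oplus f(d_\ell)$; applying the definition $\tr(a_1\oplus\cdots\oplus a_\ell) = \tr(a_1)+\cdots+\tr(a_\ell)$ then yields $S(\mu) = -\tr(f(d)) = -\sum_{k=1}^\ell \tr(d_k\log d_k)$.

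For the second formula I would start from the relation $d = \tilde d\,\zeta_A$, so that $\tilde d = d\zeta_A^{-1} = \bigoplus_k n_k^{-1} d_k$ is again block diagonal and hence $f(\tilde d) = \bigoplus_k f(n_k^{-1}d_k)$. Using $\trr(x) = \tr(x\zeta_A)$ together with the blockwise action of $\zeta_A = \bigoplus_k n_k 1_{n_k}$, I obtain $\trr(f(\tilde d)) = \sum_k n_k\tr(f(n_k^{-1}d_k))$. The step requiring attention is the spectral identity $f(n_k^{-1}d_k) = n_k^{-1}f(d_k) - n_k^{-1}\log(n_k)\,d_k$: for a nonzero eigenvalue $\lambda$ of $d_k$ this is the elementary computation $n_k^{-1}\lambda\log(n_k^{-1}\lambda) = n_k^{-1}\lambda\log\lambda - n_k^{-1}\log(n_k)\lambda$, while both sides vanish on the kernel of $d_k$ because $f(0) = 0$. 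Substituting and simplifying gives $n_k\tr(f(n_k^{-1}d_k)) = \tr(d_k\log d_k) - \log(n_k)\tr(d_k)$, and summing over $k$ produces the claimed expression for $\tilde S(\mu)$.

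The third formula follows by combining the first two with the observation that $\mu(\log\zeta_A) = \tr(d\log\zeta_A)$, since $\mu(a) = \tr(ad)$ and $\tr$ is cyclic. Because $\log\zeta_A = \bigoplus_k \log(n_k)1_{n_k}$ is block diagonal, $\tr(d\log\zeta_A) = \sum_k \log(n_k)\tr(d_k)$, which is precisely the difference $\tilde S(\mu) - S(\mu)$ read off from the first two formulas, giving $\tilde S(\mu) = S(\mu) + \mu(\log\zeta_A)$.

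I expect no serious obstacle here; the computation is routine. The only point genuinely demanding care is the gloss on $\log$, namely ensuring that the spectral identity for $f(n_k^{-1}d_k)$ is valid on the kernel of $d_k$ as well as on its range, so that the derivation remains correct when $\mu$ is not faithful and $d$ is not invertible.
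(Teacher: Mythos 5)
Your proof is correct and takes essentially the same route as the paper: a direct computation from the definitions using the block decomposition, the relation $d = \tilde d\,\zeta_A$, and the identity $\log(n_k^{-1}d_k) = \log d_k - \log(n_k)1$ interpreted through the gloss $f(t)=t\log t$, $f(0)=0$. If anything, your blockwise verification of the spectral identity on the kernel of $d_k$ is slightly more careful about non-invertible density operators than the paper's own computation.
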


\begin{proof}
We calculate that

\begin{align*}
-S(\mu) & = \tr(d \log d) = \tr (d_1 \log d_1 \oplus \cdots \oplus d_\ell \log d_\ell)
\\ & = \tr(d_1 \log d_1) + \cdots + \tr(d_\ell \log d_\ell),
\end{align*}
\begin{align*}
- \tilde S(\mu) & = \trr(\tilde d \log \tilde d) = \trr (\zeta_A^{-1} d \log (\zeta_A^{-1} d))
=
\trr(\zeta_A^{-1} d \log d) - \trr(\zeta_A^{-1} d \log \zeta_A)
\\ & = \tr(d \log d) - \tr ( d \log \zeta_A)
= S(\mu) + \mu(\zeta_A),
\end{align*}
\begin{align*}
- \tilde S(\mu) & = \tr(d \log d) - \tr ( d \log \zeta_A)
\\ & = S(\mu) - \tr((\log n_1) d_1 \oplus \ldots \oplus (\log n_\ell) d_\ell)
\\ & = S(\mu) - \tr((\log n_1) d_1) - \ldots - \tr((\log n_\ell) d_\ell)
\\ & = \tr(d_1 \log d_1) + \cdots + \tr(d_\ell \log d_\ell) - \tr(d_1)\log(n_1) - \cdots - \tr(d_\ell)\log(n_\ell).
\end{align*}
\end{proof}

Proposition~\ref{P} implies that for all finite-dimensional $C^*$-algabras $A$ and states $\mu\: A \to \CC$, we have that
$
\tilde S(\mu) = S(\mu) + \mu(\log \zeta_A).
$
We use this equation for the following corollary.

\begin{corollary}\label{Q}
Let $A$ and $B$ be finite-dimensional $C^*$-algebras, and let $\varphi\:A \to B$ be a unital completely positive map. The following are equivalent:
\begin{enumerate}
\item $\varphi$ is a homomorphism,
\item for all positive integers $k$ and all states $\mu\: B \tensor M_k(\CC) \to \CC$,
$$\tilde S(\mu \circ (\varphi \tensor \id)) \leq \tilde S(\mu).$$
\end{enumerate}
\end{corollary}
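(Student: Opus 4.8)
This statement differs from Theorem~\ref{O} only in the range of the auxiliary algebra: condition (2) here quantifies over the matrix algebras $M_k(\CC)$, whereas condition (2) of Theorem~\ref{O} quantifies over all finite-dimensional $C^*$-algebras $C$. The plan is therefore to show that these two families of conditions on $\varphi$ coincide and then to cite Theorem~\ref{O}. The implication $(1) \Rightarrow (2)$ is immediate, since if $\varphi$ is a homomorphism then Theorem~\ref{O} gives $\tilde S(\mu \circ (\varphi \tensor \id)) \leq \tilde S(\mu)$ for every finite-dimensional $C^*$-algebra $C$, and in particular for $C = M_k(\CC)$.

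For $(2) \Rightarrow (1)$, I would strengthen hypothesis (2) from matrix algebras to arbitrary finite-dimensional $C^*$-algebras and then apply the implication $(2) \Rightarrow (1)$ of Theorem~\ref{O}. So let $C = M_{m_1}(\CC) \oplus \cdots \oplus M_{m_r}(\CC)$ be arbitrary, let $z_i \in C$ be the central projection onto the $i$-th summand, and let $\mu\: B \tensor C \to \CC$ be a state. Set $\lambda_i = \mu(1 \tensor z_i)$ and, for those $i$ with $\lambda_i > 0$, define a state $\mu_i$ on $B \tensor M_{m_i}(\CC)$ by $\mu_i(y) = \lambda_i^{-1} \mu(y)$, where $y$ is regarded as an element of $B \tensor C$ supported on the $i$-th summand. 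Then $\mu = \sum_i \lambda_i \mu_i$ and $\sum_i \lambda_i = 1$. Since $\varphi$ is unital, $(\varphi \tensor \id)(1 \tensor z_i) = 1 \tensor z_i$, so $\mu \circ (\varphi \tensor \id)$ again assigns weight $\lambda_i$ to the $i$-th summand; and because $\varphi \tensor \id$ carries the $i$-th summand of $A \tensor C$ into the $i$-th summand of $B \tensor C$, acting there as $\varphi \tensor \id$ on $A \tensor M_{m_i}(\CC)$, one checks that $\mu \circ (\varphi \tensor \id) = \sum_i \lambda_i (\mu_i \circ (\varphi \tensor \id))$ with the very same weights.

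The crux is the behavior of adjusted entropy under such a decomposition. I would prove that whenever $\nu = \sum_i \lambda_i \nu_i$ is a state on a finite-dimensional $C^*$-algebra with the $\nu_i$ supported on distinct direct summands, one has $\tilde S(\nu) = \sum_i \lambda_i \tilde S(\nu_i) - \sum_i \lambda_i \log \lambda_i$. This is most cleanly obtained from Definition~\ref{M}: the adjusted density operator of $\nu$ is the block sum $\tilde d = \bigoplus_i \lambda_i \tilde d_i$ of the adjusted density operators $\tilde d_i$ of the $\nu_i$, and expanding $-\trr(\tilde d \log \tilde d)$ via $\log(\lambda_i \tilde d_i) = (\log \lambda_i)\,1 + \log \tilde d_i$ together with $\trr(\tilde d_i) = 1$ yields exactly this formula. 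Alternatively, it follows from the identity $\tilde S(\nu) = S(\nu) + \nu(\log \zeta)$ of Proposition~\ref{P}, the factorization $\zeta_{B \tensor C} = \bigoplus_i \zeta_{B \tensor M_{m_i}(\CC)}$, and the standard block-additivity of the von Neumann entropy $S$. Applying this identity to both $\mu$ and $\mu \circ (\varphi \tensor \id)$, the Shannon term $-\sum_i \lambda_i \log \lambda_i$ appears identically on both sides and cancels, so $\tilde S(\mu \circ (\varphi \tensor \id)) \leq \tilde S(\mu)$ becomes equivalent to $\sum_i \lambda_i \tilde S(\mu_i \circ (\varphi \tensor \id)) \leq \sum_i \lambda_i \tilde S(\mu_i)$, which follows term by term from hypothesis (2) applied to each $\mu_i$ with $k = m_i$.

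The one point requiring care, and the main obstacle such as it is, is confirming that the additive correction is genuinely the same for $\mu$ and for $\mu \circ (\varphi \tensor \id)$; this rests on the output state inheriting precisely the weights $\lambda_i$, which in turn uses unitality of $\varphi$ and the block-diagonality of $\varphi \tensor \id$ across the summands of $C$. Everything else is bookkeeping. Finally, to reconcile this statement with the formulation of Theorem~\ref{A} in the introduction, I would note that the same identity $\tilde S(\mu) = S(\mu) + \mu(\log \zeta_A)$ of Proposition~\ref{P} identifies the adjusted entropy of Definition~\ref{M} with the expression $-\mu(\log d) + \mu(\log \zeta_A)$ used there.
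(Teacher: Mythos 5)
Your proof is correct, and like the paper it reduces the corollary to Theorem~\ref{O}, but the reduction itself follows a genuinely different route. Both arguments must bridge the gap between matrix-algebra ancillas $M_k(\CC)$ and an arbitrary finite-dimensional ancilla $C$; the paper goes \emph{up}, while you go \emph{down}. The paper realizes $C$ as a unital $*$-subalgebra of $M_k(\CC)$ with $k = \tr(1)$, extends $\mu$ to a state $\mu'$ on $B \tensor M_k(\CC)$ having the same density operator $d$, and shows that passing from $\mu$ to $\mu'$, and from $\mu \circ (\varphi \tensor \id)$ to $\mu' \circ (\varphi \tensor \id)$, changes $\tilde S$ by one and the same correction term involving $\tr(d \log(1 \tensor \zeta_C))$ and $\log k$; unitality of $\varphi$ enters there through the identity $(\varphi \tensor \id)(\log(1 \tensor \zeta_C)) = \log(1 \tensor \zeta_C)$, which the paper establishes by a power-series argument. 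You instead decompose $C$ into its simple summands $M_{m_i}(\CC)$ and write $\mu = \sum_i \lambda_i \mu_i$ over the central projections $1 \tensor z_i$, then invoke block-additivity of adjusted entropy, $\tilde S\bigl(\sum_i \lambda_i \nu_i\bigr) = \sum_i \lambda_i \tilde S(\nu_i) - \sum_i \lambda_i \log \lambda_i$; unitality enters instead to guarantee that $\mu \circ (\varphi \tensor \id)$ decomposes with the \emph{same} weights $\lambda_i$, so the Shannon terms cancel and the inequality follows termwise from your hypothesis applied to each $\mu_i$. Your supporting steps all check out: the adjusted density operator of $\sum_i \lambda_i \nu_i$ is $\bigoplus_i \lambda_i \tilde d_i$; the block identity follows from $f(\lambda t) = \lambda f(t) + \lambda (\log \lambda)\, t$ for $f(t) = t \log t$, with summands where $\lambda_i = 0$ contributing zero blocks; and the restriction of $\trr$ to a block is the adjusted trace of that block because $\zeta_{B \tensor C} = \bigoplus_i \zeta_{B \tensor M_{m_i}(\CC)}$. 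As for what each approach buys: the paper's embedding requires nothing beyond Proposition~\ref{P}, but pays with the extension bookkeeping and the series manipulation; your decomposition isolates a clean, reusable fact (adjusted entropy is block-additive up to the Shannon entropy of the weights), after which the reduction is pure bookkeeping, and it makes more transparent exactly why matrix-algebra ancillas already suffice in condition (2).
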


\begin{proof}
The implication $(1) \Rightarrow (2)$ is an immediate consequence of Theorem~\ref{O}. We prove the implication $(2) \Rightarrow (1)$. Assume $(2)$. Let $C$ be a finite-dimensional $C^*$-algebra, and let $\mu\: B \tensor C \to \CC$ be a state. Without loss of generality, $C$ is a unital $*$-subalgebra of $M_k(\CC)$ for $k = \tr(1)$, and for all $c \in C$, we may evaluate $\tr(c)$ either in $C$ or in $M_k(\CC)$. Let $d \in B \tensor C$ be the density operator of $\mu$; then, let $\mu'\: B \tensor M_{k}(\CC) \to \CC$ be defined by $\mu'(x) = \tr(dx)$. Evidently, $\mu'$ is an extension of $\mu$, and $d$ is the density operator of $\mu'$ as well as of $\mu$. Similarly,
$(\varphi^\dagger \tensor \id)(d) \in A \tensor C$ is the density operator of $\mu' \circ (\varphi \tensor \id)$ as well as of $\mu \circ (\varphi \tensor \id)$.

We observe that
$ S(\mu') = \tr(d \log d) = S(\mu)$. Similarly, $S(\mu' \circ (\varphi \tensor \id)) = S(\mu \circ (\varphi \tensor \id))$. We also observe that
\begin{align*}
(\varphi \tensor \id)(\log(1 \tensor \zeta_C)) & = - (\varphi \tensor \id)(\log(1 \tensor \zeta_C^{-1}))
=
(\varphi \tensor \id) \left(\sum_{n=1}^\infty \frac 1 n 1 \tensor (1- \zeta_C^{-1})^n\right)
\\ & =
\sum_{n=1}^\infty \frac 1 n 1 \tensor (1- \zeta_C^{-1})^n
=
- \log(1 \tensor \zeta_C^{-1})
=
\log(1 \tensor \zeta_C),
\end{align*}
because $\varphi$ is a unital map and $\zeta_C^{-1}$ has spectrum in $(0,1]$. This calculation assumes that our logarithm is the natural logarithm, but this special case clearly implies the general case.

We now calculate that:
\begin{align*}
& \tilde S(\mu' \circ (\varphi \tensor \id))
\\ & =
S(\mu' \circ (\varphi \tensor \id)) + \tr((\varphi^\dagger \tensor \id)(d) \log(\zeta_A \tensor \zeta_C))
\\ & = 
S(\mu \circ (\varphi \tensor \id)) +  \tr((\varphi^\dagger \tensor \id)(d) \log(\zeta_A \tensor 1)) + \tr((\varphi^\dagger \tensor \id)(d)  \log(1 \tensor \zeta_C))
\\ & = 
S(\mu \circ (\varphi \tensor \id)) +  \tr((\varphi^\dagger \tensor \id)(d) \log(\zeta_A \tensor 1)) + \tr(d (\varphi \tensor \id) (\log(1 \tensor \zeta_C)))
\\ & = 
S(\mu \circ (\varphi \tensor \id)) +  \tr((\varphi^\dagger \tensor \id)(d) \log(\zeta_A \tensor 1)) + \tr(d \log(1 \tensor \zeta_C))
\\ & =
S(\mu \circ (\varphi \tensor \id)) +  \tr((\varphi^\dagger \tensor \id)(d) \log(\zeta_A \tensor 1)) + \tr(d \log(1 \tensor \zeta_C))
\\ & \hspace{22em} + \tr((\varphi^\dagger \tensor \id)(d) \log(1\tensor m1)) - \log m
\\ & =
S(\mu \circ (\varphi \tensor \id)) +  \tr((\varphi^\dagger \tensor \id)(d) \log(\zeta_A \tensor m 1)) + \tr(d \log(1 \tensor \zeta_C)) - \log m
\\ & =
\tilde S(\mu \circ (\varphi \tensor \id)) + \tr(d \log(1 \tensor \zeta_C)) - \log m
\end{align*}
Similarly, $\tilde S(\mu') = \tilde S(\mu) + \tr(d \log(1 \tensor \zeta_C)) - \log m$. Thus, we find that $\tilde S(\mu \circ (\varphi \tensor \id))  \leq \tilde S(\mu)$ for all finite-dimensional $C^*$-algebras and all states $\mu\:B \tensor C \to \CC$. Therefore, $\varphi$ is a homomorphism by Theorem~\ref{O}.
\end{proof}

\begin{remark}
We have considered the adjusted entropy $\tilde S$ in the setting of finite-dimensional $C^*$-algebras or, equivalently, finite-dimensional von Neumann algebras. The adjusted entropy $\tilde S$ can be similarly defined in the setting of \emph{hereditarily atomic von Neumann algebras}, which are von Neumann algebras of the form $\bigoplus_{k \in I} M_{n_k}(\CC)$, where $(n_k \suchthat k \in I)$ is any indexed family of positive integers. The notation here refers to the $\ell^\infty$-direct sum of von Neumann algebras. Such von Neumann algebras can be viewed as a quantum generalization of sets \cite{quantumsets}. Of course, the entropy of a normal state on a hereditarily atomic von Neumann algebra may be infinite, even in the commutative case. However, there is always an obvious separating family of normal states with finite entropy, in contrast to Remark~\ref{B}.

\end{remark}

\section*{Acknowledgements}

I thank Peter Selinger for discussing this problem with me. I thank Piotr So{\l}tan for introducing me to the functional that I have called the adjusted trace and for suggesting a reference. I thank an anonymous referee for suggesting the connection to relative entropy.

\end{document}